	\theoremstyle{plain}
	\newtheorem{theorem}{Theorem}[section]
	\newtheorem{lem}[theorem]{Lemma}
	\newtheorem{prop}[theorem]{Proposition}
	\newtheorem{cor}[theorem]{Corollary}
	\theoremstyle{definition}
	\newtheorem{defn}[theorem]{Definition}
	\newtheorem*{acknowledgements}{Acknowledgements}
	\theoremstyle{remark}
	\newtheorem{rmk}[theorem]{Remark}
	\newtheorem{eg}{Example}
	\newcommand{\C}{\mathbb{C}}
	\newcommand{\X}{\mathcal{X}} 
	\newcommand{\N}{\mathbb{N}}
	\newcommand{\Z}{\mathbb{Z}}
	\newcommand{\E}{\mathcal{E}}
	\newcommand{\Q}{\mathbb{Q}}
	\renewcommand{\d}{\text{d}} 
	\newcommand{\<}{\langle}
	\renewcommand{\>}{\rangle}
	\newcommand{\sbst}{\subseteq}
	\renewcommand{\l}{\ell}
	\newcommand{\Mod}[1]{\ \mathrm{mod}\ #1}
	\renewcommand{\L}{\mathscr{L}}
	\renewcommand{\SS}[3]{(#1,#2,#3)_{p}}
	\newcommand{\ord}{\text{ord}_p}
\begin{document}

\newgeometry{margin=1in}

\title[A symmetric $p$-adic symbol for triples of modular forms]
{A symmetric $p$-adic symbol for triples of modular forms} 

\author[Wissam Ghantous]{Wissam Ghantous}
\address{University of Central Florida, \\ 4393 Andromeda Loop N, Orlando, FL 32816, USA}
\subjclass[2010]{11F12, 11G05, 11G35, 11G40, 11Y40.}

\date{\today }

\keywords{$p$-adic triple symbol, $p$-adic Abel-Jacobi map, Poincaré pairing, nearly overconvergent projection, ordinary projection, slope projection.}

\begin{abstract}
	In \cite{DR14}, the authors define the Garrett-Rankin triple product $p$-adic $L$-function 
	and relate it to the image of certain diagonal cycles under the $p$-adic Abel-Jacobi map. 
	We introduce a new $p$-adic triple symbol based on this $p$-adic $L$-function and show that it satisfies symmetry relations, 
	when permuting the three input modular forms. 
	We also provide computational examples illustrating this symmetry property. 
	To do so, we extend the algorithm provided in \cite{Lau14} to allow for ordinary projections 
	of nearly overconvergent modular forms -- 
	not just overconvergent modular forms -- as well as certain projections over spaces of non-zero slope. 
	Our work also gives an efficient method to calculate certain Poincaré pairings in higher weight, 
	which may be of independent interest. 
\end{abstract}

\maketitle 

\tableofcontents

\section{Introduction}  \label{IntroOfPaper}
	
	Let $f,g,h$ be three normalised cuspidal modular eigenforms (for all the Hecke operators) 
	over $\Q$ of weight $2$, level $N$ and trivial characters. 
	Fix a prime $p \ge 5$ and assume that $p \nmid N$. 
	Let $\alpha_{f},\beta_{f}$ be the roots of the \emph{Hecke polynomial} 
		\begin{equation*} 
			x^{2} - a_p(f) x + p. 
		\end{equation*} 
	Then the modular form $f$ is \emph{regular at $p$}, i.e. the roots $\alpha_f$ and $\beta_f$ are different (cf. \cite{CE98}). 
	Assume that $f$ is \emph{ordinary at $p$}, i.e. that one of the roots of $x^{2} - a_p(f) x + p$, say $\alpha_{f}$, 
	is a $p$-adic unit. 
	Define the following two modular forms: 
		\begin{align} \label{22Sep210442p}
		\begin{split} 
			f_\alpha (q)&:= f(q)- \beta_f f(q^{p}); \\ 
			f_\beta (q)&:= f(q)- \alpha_f f(q^{p}).
		\end{split} 
		\end{align} 
	We call $f_\alpha$ and $f_\beta$ the \emph{$p$-stabilizations} of $f$. 
	They have level $pN$, and are eigenforms for the $U_p$ operator with respective eigenvalues $\alpha_f$ and $\beta_f$. 
	Since we assumed that $\alpha_{f}$ is a unit, 
	it is customary to call $f_\alpha$ the \emph{ordinary $p$-stabilization} of $f$. 
	Define the following Euler factors: 
	\begin{align} \label{22Oct111106a}
	\begin{split}
	\begin{array}{ll}
			\mathcal{E}(f,g,h):= (1-\beta_f \alpha_g \alpha_h p^{-2}) (1-\beta_f \alpha_g \beta_h p^{-2}) (1-\beta_f \beta_g \alpha_h p^{-2}) (1-\beta_f \beta_g \beta_h p^{-2}); \\ 
			\tilde{\mathcal{E}}(f,g,h):= (1-\alpha_f \alpha_g \alpha_h p^{-2}) (1-\alpha_f \alpha_g \beta_h p^{-2}) (1-\alpha_f \beta_g \alpha_h p^{-2}) (1-\alpha_f \beta_g \beta_h p^{-2}) ; \\
	\end{array} 
	\\
	\begin{array}{llllllllll} 
		\mathcal{E}_0(f) := 1-\beta_f^{2} \, p^{-1}; & & \tilde{\mathcal{E}_0}(f) := 1-\alpha_f^{2} \, p^{-1}; & & & & & & & \\ 
		\mathcal{E}_1(f) := 1-\beta_f^{2} \, p^{-2}; & & \tilde{\mathcal{E}_1}(f) := 1-\alpha_f^{2} \, p^{-2}. & & & & & & & \\
	\end{array}
	\end{split}
	\end{align}

	Assume $f$ is a newform and let $\lambda_{f_{\alpha}}$ be the 
	unique Hecke-equivariant linear functional on $S_2(\Gamma_0(N))$ 
	that factors through the Hecke eigenspace associated to $f_{\alpha}$
	and that is normalised to send $f_{\alpha}$ to $1$ (cf. Definition 2.7 in \cite{Loe18}). 
	Assume as well that the $f_{\beta}$-eigenspace is semisimple 
	and define $\lambda_{f_{\beta}}$ analogously to $\lambda_{f_{\alpha}}$. 
	Let $\text{d}:= q \frac{\d}{\d q}$ be the Serre differential operator 
	and $\omega_f:= f(q) \frac{\text{d}q}{q}$ the differential associated to $f$. 
	Consider the quantity 
		\begin{equation} \label{22Sep211134a}
			\frac{\left\<\omega_f, \phi(\omega_f) \right\>}{p} 
			\left( \frac{\mathcal{E}_1(f)}{\mathcal{E}(f,g,h)} \ \beta_{f} \ \lambda_{f_{\alpha}} \left( \text{d}^{-1}(g^{[p]} ) \times {h} \right) 
			+\frac{\tilde{\mathcal{E}}_1(f)}{\tilde{\mathcal{E}}(f,g,h)} \ \alpha_{f} \ \lambda_{f_{\beta}} 
			\left( \text{d}^{-1}(g^{[p]} ) \times {h} \right) \right), 
		\end{equation} 
	where $\< \cdot , \cdot \>$ is the Poincaré pairing (cf. Theorem 5.2 in \cite{Col95}) and $\phi$ is the Frobenius map. 
	It turns out 
	that this quantity is independent -- up to a sign -- 
	of the order of $f,g$ and $h$. 
	This result is particularly surprising since the quantity in (\ref{22Sep211134a}) 
	does not appear to be symbolically symmetric in $f,g$ and $h$. 
	This will fit into the framework of our paper, as we relate this quantity to 
	the image of certain diagonal cycles under the $p$-adic Abel-Jacobi map. 
	
	The above can even be generalised to modular forms of higher weight and any characters satisfying $\chi_f \chi_g \chi_h=1$, 
	which we will do in Section \ref{NewLfunction}. 
	In that case, one needs to adjust the Euler factors from (\ref{22Oct111106a}) 
	and introduce an extra factor and some twists by $\chi_f^{-1}$ in (\ref{22Sep211134a}). 
	One would also require that the weights be balanced, i.e. 
	that the largest one is {strictly} smaller than the sum of the other two. 
	\\ 
	
	In order to explicitly calculate (\ref{22Sep211134a}), for modular forms of general weight, we need certain computational tools, 
	namely being able to compute ordinary projections of nearly overconvergent modular forms, 
	as well as projection over the slope $\sigma$ subspace for $\sigma$ not necessarily zero. 
	In \cite{Lau14} (see also \cite{Lau11}), the author describes an algorithm allowing the calculation of ordinary projections of overconvergent modular forms. 
	We introduce here improvements to this algorithm, allowing us to accomplish the aforementioned tasks. 
	We illustrate this in the experimental calculations detailed in Section \ref{22Oct260529p}. 
	The use of this new algorithm is not restricted to this paper. 
	\\ 
	
	An additional application of our code is the calculation of certain periods of modular forms. 
	Indeed, using the symmetry of our new $p$-adic triple symbol, introduced in Section \ref{22Oct210251p}, 
	we explain how one can use our algorithms to compute 
	the Poincaré pairing $\Omega_f:= \< \omega_f, \phi(\omega_f) \>$, where $\phi$ denotes the Frobenius action and $f$ 
	is a newform of any weight over $\Q$. 
	See \cite{DL21}, \cite{DLR16} and Section III.5 of \cite{Nik11} for instances where this pairing appears in the literature. 
	There are currently no known ways of evaluating general Poincaré pairings, and the value of $\Omega_f$ 
	has so far only been computed in cases where $f$ has weight $2$ using Kedlaya's algorithm \cite{Ked01}. 
	
	$ $
	\\ 
	\textbf{Roadmap.} Our paper is structured as follows. 
	In Section \ref{SectionBackground}, we introduce the main theoretical notions used in this work. 
	That is, the main results concerning overconvergent and nearly overconvergent modular forms, 
	the Katz basis, the $U_p$ operator, and finally Lauder's algorithm introduced in \cite{Lau14} 
	to compute ordinary projections of overconvergent modular forms. 
	In Section \ref{ExplicitAlgorithmicMethods}, we describe improvements to that algorithm. 
	Firstly, we introduce an overconvergent projector, 
	and use it for the calculation of the ordinary projection of a nearly overconvergent modular form. 
	Secondly, we describe a method to compute the projection of a nearly overconvergent modular form 
	over the slope $\sigma$ subspace, 
	where $\sigma$ is not necessarily zero. 
	Section \ref{NewLfunction} is dedicated to our new $p$-adic triple symbol. 
	We first describe the Garrett-Rankin triple product $p$-adic $L$-function defined in \cite{DR14} and its relation to 
	the $p$-adic Abel-Jacobi map. 
	We then use this relation to introduce a new 
	$p$-adic triple symbol $\SS{f}{g}{h}$. 
	After that, we focus on studying the symmetry properties of $\SS{f}{g}{h}$ 
	when permuting ${f,g}$ and ${h}$. 
	Finally, in Section \ref{22Oct260529p}, we provide some computational examples 
	and show how to compute the Poincaré pairing $\< \omega_f, \phi(\omega_f) \>$, 
	where $\phi$ denotes the Frobenius action and $f$ 
	is a newform with rational coefficients of any weight.

\begin{acknowledgements}
	I would like to thank my DPhil supervisor Alan Lauder for his help and guidance over the past years. 
	I am also very grateful to Henri Darmon, Adrian Iovita, David Loeffler and Victor Rotger. 
	Their ideas, advice and input to this article 
	were extremely helpful. 
	I would like to specifically thank David Loeffler for his ideas on overconvergent projections and slope projections 
	which are central to Section \ref{ExplicitAlgorithmicMethods}. 
	I also particularly thank Victor Rotger for his advice on Section \ref{22Oct250500p}. 
	Furthermore, I am deeply indebted to Henri Darmon for suggesting this project to me 
	and giving generous help throughout this endeavour. 
	Lastly, I would like to thank the anonymous reviewer for their helpful and detailed advice, 
	allowing me to improve many aspects of the paper.   
\end{acknowledgements}

\section{Background} \label{SectionBackground}

\subsection{Modular forms} \label{ModularForms} 
	Throughout this paper, we will mainly deal with overconvergent and nearly overconvergent modular forms 
	of integer weight, 
	a thorough account of which is given in \cite{Kat73} (for overconvergent forms) 
	and \cite{DR14} or \cite{Urb14} (for their nearly-overconvergent counterpart). 
	In this section, we will review their computational aspects as well as the necessary results that will be used in the rest of the paper. 

	Let  $B$ be 
	be the ring of integers of a finite extension $K$ of $\Q_p$ such that there exists $r \in B$ with $0< \ord(r) < {1}/{(p+1)}$.  
	Denote by $M_{k}^{p\text{-adic}}(B,\Gamma; r)$ the space of $p$-adic modular forms of weight $k$, level $\Gamma$ 
	and growth condition $r \in B$ (cf. Section 2.2 in \cite{Kat73}). 
	We might often drop the $B$ when there is no possible confusion. 
	In the case where $\ord(r) \not =0$, we say that we have an \emph{overconvergent modular form} 
	of weight $k$, level $\Gamma$ and growth condition $r$. 
	We denote the space of such overconvergent modular forms by $M_{k}^{\text{oc}}(B,\Gamma; r)$. 
	We also define the $p$-adic Banach space $M_{k}^{\text{oc}}(K,N; r) := M_{k}^{\text{oc}}(B,N; r) \otimes_{B} K$, 
	where the unit ball is given by $M_{k}^{\text{oc}}(B,N; r)$. 

	The usual definitions of overconvergent modular forms, viewing them as functions on test objects 
	or as sections of certain line bundles (cf. \cite{Gou88, Kat73}) are not very amenable to computations, as they are quite abstract. 
	An alternative way to work with such modular forms is through the Katz basis, 
	allowing us to express overconvergent modular forms as series in classical objects. 
	
	First, assume that $p\ge 5$ and does not divide $N$. 
	Let $E_{p-1}$ denote the normalised Eisenstein series of weight $p-1$ (and level $1$). 
	We write $M_{k}(B,N)$ to mean the space of modular forms over $B$ of weight $k$ 
	and level $\Gamma_1(N)$. 
	Note that we have $$M_{k}(B,N) = M_{k}(\Z_p,N) \otimes_{\Z_p} B.$$ 
	The map 
		\begin{align*}
			M_{k+(i-1)(p-1)}(B,N) & \hookrightarrow M_{k + i(p-1)}(B, N) \\ 
				 f & \mapsto E_{p-1} \cdot f
		\end{align*} 
	is injective but not surjective for all $i \ge1$. 
	It also has a finite free cokernel (\cite{Kat73}, Lemma 2.6.1), so it must split. 
	We can then, following Gouvêa’s notation in \cite{Gou88}, let $A_{k + i (p-1)}(B,N)$ be a free $B$-module
	such that 
		\[M_{k + i(p-1)}(B,N) = E_{p-1} \cdot M_{k+(i-1)(p-1)}(B,N) \oplus A_{k+i(p-1)}(B,N). \] 
	For $i=0$, let $A_{k}(B,N) := M_k(B,N)$. 
	We also have 
		\[A_{k+i(p-1)}(B,N)=A_{k+i(p-1)}(\Z_p,N) \otimes_{\Z_p} B. \] 
	We can think of $A_{k+i(p-1)}(B,N)$ as the set of modular forms of weight $k + i (p-1)$ that do not come from 
	smaller weight forms multiplied by $E_{p-1}$. 
	We notice that we can write 
		\begin{align*} 
				M_{k + i(p-1)}(B,N) 
				&= \bigoplus_{a=0}^{i} (E_{p-1})^{i-a} \cdot A_{k+a(p-1)}(B,N). 
		\end{align*} 
	We may now give an equivalent definition for the space of $r$-overconvergent modular forms 
	(cf. Proposition I.2.6 in \cite{Gou88}). 
	
	\begin{prop} \label{OCMFs3}
		The space of overconvergent modular forms of weight $k \ge 2$, growth condition $r \in B$, $\ord(r)\not =0$, and level $\Gamma_1(N)$ 
		is given by 
		\begin{align} \label{Nov20700p}
		M_{k}^{\text{oc}}(N; r)
			&= \left \{ \sum_{i=0}^{\infty} r^{i} \frac{b_i}{E_{p-1}^{i}} : b_i \in A_{k+i(p-1)}(B,N) , \lim_{i \to \infty} b_i=0 \right \}, 
		\end{align} 
		where by $\lim_{i \to \infty} b_i=0$, we mean that the $q$-expansion of $b_i$ is more and more divisible by $p$ as $i$ goes to infinity. 
	\end{prop}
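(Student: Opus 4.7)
The plan is to reduce the abstract definition of $M_{k}^{\text{oc}}(N;r)$ (as sections of a line bundle on the rigid analytic subdomain of the modular curve where $|E_{p-1}| \ge |r|$, equivalently as functions on test objects $(E,\omega,Y)$ with $Y \cdot E_{p-1}(E,\omega) = r$) to the concrete Katz-series description, by expanding along the parameter $Y = r/E_{p-1}$ and then using the splitting of the Eisenstein multiplication to canonicalize the coefficients.

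First I would recall from \cite{Kat73} that every $r$-overconvergent form $F$ of weight $k$ admits a unique expansion $F = \sum_{i \ge 0} Y^{i} c_{i}$ where $c_{i} \in M_{k+i(p-1)}(B,N)$ is a classical form and $c_{i} \to 0$ in the $p$-adic topology (this is essentially the content of the description of the affinoid $X_{1}(N)(r)$ as a formal thickening of the ordinary locus, together with the fact that multiplication by $E_{p-1}$ raises weight by $p-1$). Substituting $Y = r/E_{p-1}$ immediately puts $F$ in the shape $\sum r^{i} c_{i}/E_{p-1}^{i}$, but with $c_{i} \in M_{k+i(p-1)}(B,N)$ rather than in $A_{k+i(p-1)}(B,N)$. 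To land in the Katz basis, I would iteratively apply the decomposition $M_{k+i(p-1)} = E_{p-1}\cdot M_{k+(i-1)(p-1)} \oplus A_{k+i(p-1)}$ to rewrite each $c_{i}$ as $E_{p-1} c_{i}' + b_{i}$ with $b_{i} \in A_{k+i(p-1)}$. The leftover term $E_{p-1} c_{i}'$ contributes $r^{i} c_{i}'/E_{p-1}^{i-1} = r \cdot r^{i-1} c_{i}'/E_{p-1}^{i-1}$, which gets reabsorbed into the coefficient of $Y^{i-1}$; running this process from the bottom up (or setting up a telescoping argument on the tail) yields a unique expression $F = \sum_{i \ge 0} r^{i} b_{i}/E_{p-1}^{i}$ with $b_{i} \in A_{k+i(p-1)}(B,N)$.

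For the converse I would verify that any such series defines an $r$-overconvergent form: since $1/E_{p-1}$ is a bounded rigid analytic function on the region $|E_{p-1}| \ge |r|$ (as $E_{p-1} \equiv 1 \bmod p$ forces it to be invertible on a neighborhood of the ordinary locus), each term $r^{i} b_{i}/E_{p-1}^{i}$ is a section of the appropriate line bundle there, and the convergence hypothesis $b_{i} \to 0$ $p$-adically guarantees that the sum converges uniformly in the sup-norm on the affinoid. Finally I would check that the two maps (abstract $\leftrightarrow$ series) are mutually inverse, using the uniqueness of the Katz-type decomposition at each stage.

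The main obstacle is the bookkeeping in the uniqueness step: one has to make sure that the iterative replacement $c_{i} \leadsto (b_{i}, c_{i}')$ produces a $p$-adically convergent collection $\{b_{i}\}$, and that no two distinct Katz series can represent the same overconvergent form. I expect to handle this by a standard approximation argument, reducing modulo $p^{n}$ and exploiting the fact that the direct sum decomposition of $M_{k+i(p-1)}$ is $\Z_{p}$-integral (so it survives base change to $B$ and to $B/p^{n}B$), together with the injectivity of multiplication by $E_{p-1}$ on $q$-expansions.
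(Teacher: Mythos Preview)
The paper does not actually prove this proposition; it is stated as a recollection of known material from \cite{Kat73} and \cite{Gou88}, with no argument given. So there is no ``paper's own proof'' to compare against.

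Your proposal is essentially the standard argument one finds in those references (see in particular \cite{Kat73}, \S2.6--2.8), and the outline is correct. A couple of small comments on the bookkeeping you flag at the end. First, the ``bottom-up'' telescoping is cleanest if you phrase it as: for each $j$, the map $\bigoplus_{i=0}^{j} A_{k+i(p-1)} \to M_{k+j(p-1)}$ sending $(b_i)$ to $\sum_i E_{p-1}^{j-i} b_i$ is an isomorphism of free $B$-modules; this avoids any infinite iteration and makes the integrality and uniqueness transparent at every finite stage. Second, the convergence of the resulting $(b_i)$ follows because the projection $M_{k+i(p-1)}(B,N) \twoheadrightarrow A_{k+i(p-1)}(B,N)$ is a $B$-module map (the splitting is chosen integrally over $\Z_p$ and base-changed), hence is norm-nonincreasing; so $c_i \to 0$ forces $b_i \to 0$. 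With those two points made precise your sketch goes through without difficulty.
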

	
	\begin{rmk} 
		Similarly, we can define the space of overconvergent modular forms with a given character $\chi$ to be 
			\begin{align*} 
			M_{k}^{\text{oc}}(N, \chi; r) 
				:= \left \{ \sum_{i=0}^{\infty} r^{i} \frac{b_i}{E_{p-1}^{i}} : b_i \in A_{k+i(p-1)}(B,N, \chi) , \lim_{i \to \infty} b_i=0 \right \}, 
		\end{align*} 
		where $A_{k+i(p-1)}(N, \chi)$ is defined analogously to $A_{k+i(p-1)}(N)$ by 
			\[M_{k + i(p-1)}(B,N,\chi) = E_{p-1} \cdot M_{k+(i-1)(p-1)}(B,N,\chi) \oplus A_{k+i(p-1)}(B,N, \chi). \] 
	\end{rmk} 
	 
	An expansion for $f \in M_{k}^{\text{oc}}(B,N,\chi; r)$ of the form $f=\sum_{i=0}^{\infty} r^{i} \frac{b_i}{E_{p-1}^{i}}$ 
	is called a \emph{Katz expansion}. 
	We can also talk about the space of all overconvergent modular forms $M_{k}^{\text{oc}}(B,N,\chi)$ 
	without specifying the growth condition, 
		\[M_{k}^{\text{oc}}(B,N,\chi):= \bigcup_{\substack{r \in B \\ \ord(r) >0}} M_{k}^{\text{oc}}(B,N,\chi; r). \] 
	Similarly, we let $M_{k}^{p\text{-adic}}(B,N,\chi)$ denote the space of all $p$-adic modular forms 
	of level $\Gamma_1(N)$, weight $k$, character $\chi$ and any growth condition $r \in B$. 
	If $r=r_0 r_1$, for $r_0,r_1 \in B$, we then have an inclusion (c.f. Corollary I.2.7 in \cite{Gou88}) 
		\begin{align} \label{dec181104p} 
			\begin{split}
				M_{k}^{p\text{-adic}}(B,N,\chi; r) & \hookrightarrow M_{k}^{p\text{-adic}}(B,N,\chi; r_0), \\ 
				 \sum_{i=0}^{\infty} r^{i} \frac{b_i}{E_{p-1}^{i}} & \mapsto \sum_{i=0}^{\infty} r_0^{i} \frac{(r_1^{i} b_i)}{E_{p-1}^{i}}.
			\end{split} 
		\end{align}
	We now introduce the Serre differential operator (cf. Theorem 5 in \cite{Ser73}) 
		\begin{align*} 
		\begin{split} 
		q \frac{\text{d}}{\text{d}q} : M_{k}^{p\text{-adic}}(B,\Gamma_1(N)) &  \longrightarrow M_{k+2}^{p\text{-adic}}(B,\Gamma_1(N)) \\ 
		\sum_{n} a_n q^{n} & \mapsto \sum_{n} n a_n q^{n}.
		\end{split}
		\end{align*} 
 
	This operator does not necessarily preserve overconvergence in general. 
	We do however, have the following special case. 
	\begin{theorem}[Proposition 4.3, \cite{Col95}] \label{21may251008a}
		Let $k \ge 1$ and $f \in M_{1-k}^{\emph{oc}}(K,\Gamma_1(N))$. 
		Then, $\left(q \frac{\text{d}}{\text{d}q} \right)^{k}f \in M_{1+k}^{\emph{oc}}(K,\Gamma_1(N))$. 
	\end{theorem}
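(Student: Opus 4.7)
The plan is to interpret $\theta := q\tfrac{\d}{\d q}$ geometrically, as the operator arising from the Gauss--Manin connection on the universal elliptic curve, and to exploit the fact that for the specific combination of weight $1-k$ and $k$ iterations, the obstructions to overconvergence cancel.

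At the level of $q$-expansions, $\theta$ simply sends $\sum a_n q^n$ to $\sum n a_n q^n$, and geometrically it arises by composing the Gauss--Manin connection $\nabla$ with projection along the unit-root subbundle of the relative de Rham cohomology $\mathcal{H}^1$ of the universal elliptic curve. The unit-root splitting extends only to a strict neighbourhood of the ordinary locus, with deterioration of the growth condition, which is precisely why $\theta$ fails to preserve overconvergence in general. However, if we view $f \in M_{1-k}^{\text{oc}}(B,\Gamma_1(N))$ as a section of $\omega^{\otimes(1-k)}$ and apply $\nabla$ iteratively $k$ times, the result is a nearly overconvergent form of weight $1+k$; the theorem is the statement that this nearly overconvergent form in fact lies in the overconvergent subspace $\omega^{\otimes(1+k)}$, so that the spurious ``nearly overconvergent'' contributions cancel.

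To verify this concretely, I would work with the Katz basis description of Proposition \ref{OCMFs3}. Writing $f = \sum_{i \ge 0} r^i b_i / E_{p-1}^i$ with $b_i \in A_{1-k+i(p-1)}(B,N)$, I would apply $\theta$ inductively using the Leibniz rule together with the Ramanujan identity expressing $\theta E_{p-1}$ in terms of $E_{p-1}$, $E_2$, and $E_{p+1}$. Each application introduces spurious $E_2$ factors, which are responsible for the loss of overconvergence; the main obstacle is bookkeeping the precise way in which these $E_2$ factors conspire, over exactly $k$ iterations starting from weight $1-k$, to assemble into a polynomial whose non-overconvergent contributions vanish. Establishing this combinatorial cancellation, either directly on Katz expansions or via the geometric interpretation above, is the content of \cite[Theorem~2]{CGJ95}; the statement and proof ultimately trace back to Coleman's theta-operator theorem, of which this is the case $n = k-1$ of the claim that $\theta^{n+1}$ maps overconvergent forms of weight $-n$ to overconvergent forms of weight $n+2$.
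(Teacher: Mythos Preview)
The paper does not supply a proof of this theorem: it is stated with attribution to \cite{CGJ95} and used as a black box. So there is no in-paper argument to compare your proposal against.

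That said, your outline correctly identifies the two standard ingredients --- the geometric interpretation of $\theta$ via the Gauss--Manin connection and unit-root splitting, and the combinatorial cancellation of the $E_2$-terms that occurs precisely when one iterates $k$ times from weight $1-k$. But as written the proposal is circular: in your final paragraph you explicitly defer the crucial step (``Establishing this combinatorial cancellation\ldots is the content of \cite[Theorem~2]{CGJ95}'') to the very reference you are meant to be proving. If you want to turn this into a self-contained proof, you would need to actually carry out either (a) the inductive Katz-basis computation, tracking the $E_2$-coefficients through the Ramanujan-type identities and showing they vanish after $k$ steps, or (b) the geometric argument that $\nabla^k$ applied to a section of $\omega^{\otimes(1-k)}$ lands in $\omega^{\otimes(1+k)}$ rather than merely in $\mathrm{Sym}^k\mathcal{H}^1\otimes\omega$, which comes down to a Griffiths-transversality / Kodaira--Spencer count. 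Either route is the real work; your proposal names the destination but does not walk there.
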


	We now discuss nearly overconvergent modular forms. 
	Let $M_{k}^{\text{n-oc}}(K,\Gamma;r;s)$ denote the space of 
	nearly overconvergent modular forms of weight $k$, level $\Gamma$, growth condition $r \in B$ 
	and order of near overconvergence less or equal to $s \in \Z_{\ge 0}$, 
	as defined in Paragraph 3.2.1 of \cite{Urb14}. 
	When $s=0$, we retrieve the usual definition of overconvergent modular forms. 
	We have inclusions 
		\[M_{k}^{\text{oc}}(K,\Gamma_1(N);r) \sbst M_{k}^{\text{n-oc}}(K,\Gamma_1(N); r;s) 
					\sbst M_{k}^{p\text{-adic}}(K,\Gamma_1(N)) \] 
	for all $r$ and $s$. 
	We now give some concrete characterizations of nearly overconvergent modular forms 
	using the Eisenstein series $E_2$. Recall that $E_2$
	 is transcendental over the ring of overconvergent modular forms (cf. \cite{CGJ95}), so 
		\begin{equation} \label{Nov061122p}
			M_{k}^{\text{oc}}(K,\Gamma) (E_2) \cong M_{k}^{\text{oc}}(K,\Gamma)(x), 
		\end{equation} 
	where $x$ is a free variable. 
	
	\begin{prop}[Remark 3.2.2 in \cite{Urb14}] \label{Nov061020p}
		Let $f$ be a nearly overconvergent modular form over $K$ of weight $k$, 
		level $\Gamma_1(N)$ and order less or equal to $s$. 
		Then there exist overconvergent modular forms $g_0, g_1,...,g_s$ 
		with $g_i \in M_{k-2i}^{\emph{oc}}(K,\Gamma_1(N))$ such that 
			\begin{equation} \label{Nov061129p} 
				f = g_0 + g_1 E_2 + ... + g_s E_2^{s}. 
			\end{equation} 
	\end{prop}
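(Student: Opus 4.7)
The plan is to argue by induction on the order $s$ of near overconvergence. The base case $s=0$ is the equality noted immediately above the statement, $M_{k}^{\text{n-oc}}(B,\Gamma_1(N); r;0) = M_{k}^{\text{oc}}(B,\Gamma_1(N); r)$, so one may take $g_0 = f$.

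For the inductive step, I would rely on the structural fact, implicit in Urban's definition of nearly overconvergent forms in \cite{Urb14}, that the filtration by order of near overconvergence fits into a short exact sequence
\begin{equation*}
0 \to M_{k}^{\text{n-oc}}(B,\Gamma_1(N); r; s-1) \hookrightarrow M_{k}^{\text{n-oc}}(B,\Gamma_1(N); r; s) \xrightarrow{\pi_s} M_{k-2s}^{\text{oc}}(B,\Gamma_1(N); r) \to 0,
\end{equation*}
where $\pi_s$ is a ``principal symbol'' map extracting the leading order-$s$ coefficient. Multiplication by $E_2^s$ (which has weight $2s$, so $g E_2^s$ has weight $k$ when $g$ has weight $k-2s$) should land in $M_k^{\text{n-oc}}(r;s)$ and provide a section of $\pi_s$; the injectivity of this section is guaranteed by the transcendence of $E_2$ over the overconvergent ring recorded in (\ref{Nov061122p}), which prevents $E_2^s$ from being expressible through lower-order data. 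Granting this, I set $g_s := \pi_s(f) \in M_{k-2s}^{\text{oc}}(r)$; then $f - g_s E_2^s \in \ker \pi_s = M_k^{\text{n-oc}}(r; s-1)$, and the induction hypothesis yields $g_0, g_1, \ldots, g_{s-1}$ with $f - g_s E_2^s = g_0 + g_1 E_2 + \cdots + g_{s-1} E_2^{s-1}$, which is the desired decomposition.

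The main obstacle is to construct $\pi_s$ intrinsically and verify the exactness of the sequence above. Unwinding the sheaf-theoretic definition in \cite{Urb14}, nearly overconvergent forms of order $\le s$ are sections of a sheaf built from $\mathrm{Sym}^{s}$ of the relative de Rham bundle of the universal elliptic curve over a strict neighborhood of the ordinary locus; this sheaf carries the Hodge filtration whose $i$-th graded piece is isomorphic to the sheaf of overconvergent modular forms of weight $k-2i$. The map $\pi_s$ is then the canonical projection onto the top graded piece, and the splitting is realized in the $E_2$-direction, which over the ordinary locus corresponds to the Frobenius unit-root complement of the Hodge line. Making this geometric identification in a way that is compatible with the growth condition $r$ --- so that the resulting $g_i$ lie genuinely in $M_{k-2i}^{\text{oc}}(r)$ rather than in some larger $p$-adic space --- is the step that will require the most care, and is exactly the content addressed by Urban's Remark 3.2.2.
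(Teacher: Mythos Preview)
The paper does not give its own proof of this proposition: it is simply quoted as Remark~3.2.2 of \cite{Urb14}, with no argument supplied. So there is nothing in the paper to compare your proposal against.

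That said, your sketch is the correct outline and is essentially how Urban arrives at the statement. The short exact sequence you write down is the one coming from the Hodge filtration on $\mathrm{Sym}^{s}$ of the relative de Rham sheaf over a strict neighbourhood of the ordinary locus; the identification of the top graded piece with $M_{k-2s}^{\text{oc}}(r)$ is part of Urban's setup, and the splitting by powers of $E_2$ is exactly the unit-root splitting you describe. The caution you flag about growth conditions is legitimate but is resolved by the fact (going back to Katz) that the unit-root decomposition extends over strict neighbourhoods of the ordinary locus, so the coefficients $g_i$ genuinely live in $M_{k-2i}^{\text{oc}}(r)$ and not merely in some larger $p$-adic space. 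One small point: to assert that $g\,E_2^{s}$ lies in $M_k^{\text{n-oc}}(r;s)$ you are implicitly using that $E_2$ itself is nearly overconvergent of order exactly $1$, which is a separate (standard) fact you may want to cite explicitly.
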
 
	By Proposition \ref{Nov061020p} and Equation (\ref{Nov061122p}), 
	nearly overconvergent modular forms are polynomials in $E_2$ with overconvergent modular forms as coefficients, 
	so we can view them as elements of $M_{k}^{\text{oc}}(K,\Gamma_1(N))(x)$. 
	Hence, on top of having a $q$-expansion in $K[[q]]$ 
	they also have a polynomial $q$-expansion in $K[[q]][x]$ 
	(of degree less or equal to $s$, where $s$ is the order of near overconvergence) 
	that comes from Equation (\ref{Nov061129p}). 
	Consider the operator $\delta_k$ acting on nearly overconvergent modular forms of weight $k$ 
	defined on polynomial $q$-expansions (cf. Section 3.2 of \cite{Urb14}) as 
		\begin{equation*} 
			(\delta_{k}f) (q, x):= q \frac{\text{d}}{\text{d}q}f + k x f(q). 
		\end{equation*} 
	Then $\delta_{k}$ sends modular forms of weight $k$ to modular forms of weight $k+2$. 
	Define as well the iterated derivative $\delta_{k}^{s} := \delta_{k+2s-2} \circ \delta_{k+2s -4} \circ ... \circ \delta_{k}$. 
	\begin{prop}[Lemma 3.3.4 in \cite{Urb14}] \label{Nov061021p} 
		Let $f$ be a nearly overconvergent modular form of weight $k$ and order less or equal to $s$ such that $k > 2s$. 
		Then, for each $i = 0, ..., s$, there exists a unique overconvergent modular form $h_i$ of weight $k - 2i$ such that 
			\[f = \sum_{i=0}^{s} \delta_{k-2i}^{i} (h_i). \] 
	\end{prop}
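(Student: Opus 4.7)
The plan is to run an induction on $s$, using the top-degree coefficient in the polynomial $X$-expansion of $f$ to peel off one summand at a time.

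By Proposition~\ref{Nov061020p} combined with the isomorphism (\ref{Nov061122p}), any $f \in M_{k}^{\text{n-oc}}(B,\Gamma_{1}(N);r;s)$ admits a unique polynomial expansion
\[
f(q,X) \;=\; \sum_{j=0}^{s} h_{j}(q)\, X^{j}, \qquad h_{j} \in M_{k-2j}^{\text{oc}}(B,\Gamma_{1}(N);r),
\]
where $X$ corresponds to $E_{2}$. The key computation I would carry out first is the leading-coefficient formula
\[
\delta_{k-2i}^{i}(g_{i}) \;=\; c_{i}\, g_{i}\, X^{i} \;+\; \bigl(\text{terms of }X\text{-degree }<i\bigr), \qquad c_{i} := \prod_{j=1}^{i}(k-2j) = (k-2)(k-4)\cdots(k-2i),
\]
valid for every $g_{i} \in M_{k-2i}^{\text{oc}}$ (with $c_{0}=1$). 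This follows by writing $\delta_{k-2i}^{i} = \delta_{k-2}\circ \delta_{k-4}\circ\cdots\circ\delta_{k-2i}$ and expanding on polynomial $q$-expansions, where $\delta_{m}$ acts as $q\tfrac{\d}{\d q} + mX$ with $q\tfrac{\d}{\d q}$ and multiplication by $X$ commuting (since $X$ is formal): the top-degree-in-$X$ contribution is obtained by selecting the $mX$ summand at each of the $i$ steps. The hypothesis $k>2s$ guarantees that each factor $k-2j$ with $1\le j\le s$ is nonzero, hence $c_{i}\neq 0$ for every $0\le i\le s$.

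With the leading-coefficient formula in hand, uniqueness is a triangular argument: if $\sum_{i=0}^{s}\delta_{k-2i}^{i}(g_{i})=0$, reading off the coefficient of $X^{s}$ — to which only the $i=s$ summand contributes in top degree — yields $c_{s}g_{s}=0$, so $g_{s}=0$; induction on $s$ then forces $g_{s-1}=\cdots=g_{0}=0$. Existence is obtained by the same recursion turned around: set $g_{s}:=h_{s}/c_{s}\in M_{k-2s}^{\text{oc}}$, observe that $f-\delta_{k-2s}^{s}(g_{s})$ is nearly overconvergent of weight $k$ with vanishing $X^{s}$-coefficient (hence of order $\le s-1$), and invoke the induction hypothesis. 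The base case $s=0$ is trivial, with $g_{0}=f$.

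I expect the main obstacle to be the leading-coefficient computation and, in particular, confirming that it is precisely the hypothesis $k>2s$ that is needed — once this is secured, the argument reduces to triangular linear algebra on the $X$-degree. A secondary technical point to verify is that $\delta_{k-2s}^{s}(g_{s})$ does genuinely lie in $M_{k}^{\text{n-oc}}$ of order $\le s$, so that the subtraction $f-\delta_{k-2s}^{s}(g_{s})$ remains within our framework; this is a standard property of $\delta_{m}$ on nearly overconvergent forms (it raises the order of near-overconvergence by exactly one on nonzero inputs), for which I would refer to \cite{Urb14}.
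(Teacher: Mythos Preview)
The paper does not supply its own proof of this proposition: it is quoted verbatim as Lemma~3.3.4 of \cite{Urb14} and used as a black box. So there is nothing in the paper to compare against directly.

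That said, your argument is correct and is essentially the standard one. With the paper's definition $(\delta_{m}f)(q,X)=q\tfrac{\d}{\d q}f+mXf$, where $q\tfrac{\d}{\d q}$ treats $X$ as a formal constant, your leading-coefficient computation $c_{i}=\prod_{j=1}^{i}(k-2j)$ is exactly right, and the hypothesis $k>2s$ forces each factor $k-2j$ (for $1\le j\le i\le s$) to be strictly positive, hence $c_{i}\ne 0$. The induction on $s$, peeling off the top $X$-coefficient at each step, then gives both existence and uniqueness by the triangular structure you describe. The only point worth tightening is cosmetic: rather than citing \cite{Urb14} for the fact that $\delta_{k-2s}^{s}(g_{s})$ lands in $M_{k}^{\text{n-oc}}$ of order $\le s$, you can simply observe that this is immediate from the polynomial-$X$ description, since each application of $\delta_{m}$ raises the $X$-degree by at most one.
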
 
	
	Propositions \ref{Nov061020p} and \ref{Nov061021p} allow us to think about 
	nearly overconvergent modular forms as having an overconvergent component in them. 
	We define the overconvergent projection 
	of nearly overconvergent modular forms as 
		\begin{equation*}
			\pi_{\text{oc}}\left( \sum_{i=0}^{s} \delta_{k-2i}^{i} (h_i) \right):=h_0. 
		\end{equation*} 
	
\subsection{The $U_p$ operator} \label{TheUpOperator} 
	Consider the Hecke, Atkin and Frobenius operators $T_p$, $U_p$ and $V$ acting on $p$-adic modular forms 
	via 
		\begin{align*}  
			T_p : &\sum_{n} a_n q^{n} \mapsto \sum_{n} a_{pn} q^{n} + \chi(p) \ p^{k-1} \sum_{n} a_{n} q^{pn}, \\
			U_p : &\sum_{n} a_n q^{n} \mapsto \sum_{n} a_{pn} q^{n}, \\ 
			V : &\sum_{n} a_n q^{n} \mapsto \sum_{n} a_n q^{pn}.
		\end{align*} 

	We notice that $V$ is a right inverse for $U_p$ and that $VU_p \left(\sum_{n} a_n q^{n} \right)=\sum_{p|n} a_n q^{n}$. 
	In particular, $U_p$ has no left inverse and we write, for a modular form $f:= \sum_{n} a_n q^{n}$, 
		\begin{equation*} 
			U_pV (f) = f, \qquad f^{[p]}:= (U_pV-VU_p)(f) = \sum_{p  \not \! \  | n} a_n q^{n}. 
		\end{equation*} 
	We call $f^{[p]}$ the $p$-depletion of $f$. 
	We have the formula $U_p(V(f) \cdot g) = f \cdot U_p(g)$, 
	for modular forms $f$ and $g$, which can be proven by looking at $q$-expansions. 
	In particular, this says that $U_p$ is multiplicative when one 
	of its inputs is in the image of the \emph{Frobenius} map $V$.

	If we restrict our attention to the case $0< \ord(r) < \frac{1}{p+1}$ (cf. Corollary II.3.7 in \cite{Gou88}, and the discussion following it), we have an inclusion 
		\begin{align*} 
				p \cdot U_p : M_{k}^{\text{oc}}(B,N; r) & \hookrightarrow M_{k}^{\text{oc}}(B,N; r^{p}). 
		\end{align*} 
	So $U_p \left( M_{k}^{\text{oc}}(K,N; r) \right) \sbst \frac{1}{p} M_{k}^{\text{oc}}(K,N; r^{p})$. 
	Combining this with the fact that $M_{k}^{\text{oc}}(B,N; r^{p})  \sbst M_{k}^{\text{oc}}(B,N; r)$ 
	via the map in (\ref{dec181104p}), 
	we can view the Atkin operator $U_p$ as an endomorphism of $M_{k}^{\text{oc}}(K,N; r)$, when $0< \ord(r) < \frac{p}{p+1}$. 
	This endomorphism 
	is completely continuous so we can apply $p$-adic spectral theory, as in \cite{Ser62}. 
	We therefore obtain that the Atkin operator $U_p$ will induce a decomposition (as in Section 2 of \cite{Wan98}), 
	for all $\sigma \in \Q_{\ge 0}$, 
	on the space of overconvergent modular forms: 
		\begin{equation} \label{23Jul120422p}
			M_k^{\text{oc}}(K,N; r) =  M_k^{\text{oc}}(K,N; r)^{\text{slope } \sigma}  
						\oplus X_{\sigma}, 
		\end{equation} 
	where $M_k^{\text{oc}}(K,N; r)^{\text{slope } \sigma}$ is the finite dimensional space of 
	overconvergent modular forms in $M_k^{\text{oc}}(K,N; r)$ of slope $\sigma$. 	
	Recall that the slope $\sigma$ subspace 
	is the generalised eigenspace of $U_p$ whose eigenvalues have $p$-adic valuation $\sigma$. 
	A similar decomposition to Equation (\ref{23Jul120422p}) also holds for classical modular forms. 
	
	The overconvergent modular forms of slope zero are said to be ordinary 
	and we denote this space by $M_k^{\text{oc,ord}}(N)$.  
	Hida’s ordinary projection operator $e_{\text{ord}} := \lim U_{p^{n!}}$ acts on overconvergent modular forms 
	and projects the entire space $M_k^{\text{oc}}(N)$ onto its subspace of ordinary forms $M_k^{\text{oc, ord}}(N)$. 
	Finally, Equation (\ref{23Jul120422p}) tells us that any overconvergent modular form $f$ 
	has a component $f_{\sigma}$ in each given slope $\sigma$. 
	This gives us a notion of slope projection $e_{\text{slope } \sigma}(f) := f_{\sigma}$, 
	with $e_{\text{slope } 0}=e_{\text{ord}}$. 
	For computational purposes, when considering a specific $\sigma$, 
	we will often make the assumption that the slope $\sigma$ space is semisimple. 
	This will be explicitly mentioned to avoid any confusion. 
	
	Consider now the space of nearly overconvergent modular forms. 
	One can still define the ordinary projection operator as $e_{\text{ord}} := \lim U_{p^{n!}}$, 
	since this operator is actually defined for any $p$-adic modular form in general. 
	It turns out that the ordinary projection of a nearly overconvergent modular form 
	only depends on its overconvergent part. 
	\begin{theorem}[Lemma 2.7 in \cite{DR14}] \label{Nov050333p} 
		Let $F$ be a nearly overconvergent modular form, 
		then 
			\[e_{\text{ord}}(f) = e_{\text{ord}} \pi_{\text{oc}}(f). \]
	\end{theorem}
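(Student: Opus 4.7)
The plan is to apply Proposition \ref{Nov061021p} to decompose $\phi$ as a sum of iterated Maass--Shimura derivatives of overconvergent forms, and then use a commutation identity between $U_p$ and $\delta_m$ to show that Hida's ordinary projector kills every summand except the overconvergent one.

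First I would invoke Proposition \ref{Nov061021p} (which applies once one arranges the order of near overconvergence $s$ to satisfy $k > 2s$) to write
$$\phi \;=\; g_0 + \sum_{i=1}^{s} \delta_{k-2i}^{i}(g_i),$$
where each $g_i \in M_{k-2i}^{\text{oc}}(B,\Gamma_1(N))$ is overconvergent. By the very definition of the overconvergent projection, $\pi_{\text{oc}}(\phi)=g_0$. Thus, by linearity of $e_{\text{ord}}$, the desired identity reduces to showing that $e_{\text{ord}}\bigl(\delta_{k-2i}^{i}(g_i)\bigr)=0$ for each $i \ge 1$.

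Next I would establish the key commutation
$$U_p \circ \delta_m \;=\; p \cdot \delta_m \circ U_p$$
on nearly overconvergent forms of weight $m$. On polynomial $q$-expansions, the Serre part is immediate from the Fourier-coefficient computation $U_p(qd/dq\,f) = p\cdot qd/dq(U_p f)$, and the action of $U_p$ on the auxiliary variable $X$ encoding the $E_2$-dependence in Urban's description is normalized so that the $mXf$ term commutes with $U_p$ up to the same factor of $p$. Iterating this relation through the chain $\delta_{k-2i}^{i} = \delta_{k-2}\circ \cdots \circ \delta_{k-2i}$ yields
$$U_p^{\,n} \circ \delta_{k-2i}^{i} \;=\; p^{\,in} \cdot \delta_{k-2i}^{i} \circ U_p^{\,n}$$
for every positive integer $n$.

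Finally, I would pass to the limit using Hida's definition $e_{\text{ord}}=\lim U_{p}^{n!}$. Since $U_p^{n!}(g_i)$ converges $p$-adically to $e_{\text{ord}}(g_i)$ and so remains bounded, and since $\delta_{k-2i}^{i}$ is continuous on the Banach module of bounded-order nearly overconvergent forms, we obtain
$$e_{\text{ord}}\bigl(\delta_{k-2i}^{i}(g_i)\bigr) \;=\; \lim_{n \to \infty} p^{\,i\cdot n!}\cdot \delta_{k-2i}^{i}\bigl(U_p^{n!} g_i\bigr)\;=\;0$$
for every $i \ge 1$, since $p^{\,i\cdot n!}\to 0$. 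Combining with Step 1 gives $e_{\text{ord}}(\phi)=e_{\text{ord}}(g_0)=e_{\text{ord}}\pi_{\text{oc}}(\phi)$, as required.

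The main obstacle is pinning down the commutation $U_p\delta_m = p\,\delta_m U_p$, specifically the correct action of $U_p$ on the variable $X$, since $\delta_m$ mixes the Serre derivative (which transforms cleanly under $U_p$) with multiplication by $X$. A slick alternative, which sidesteps the bookkeeping with $X$, would be to argue directly that $\delta_m$ raises the Newton slope of $U_p$ by $1$, so that $\delta_{k-2i}^{i}$ lands in the positive-slope subspace of the nearly overconvergent space; the vanishing under $e_{\text{ord}}$ would then be automatic.
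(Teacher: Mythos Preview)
The paper does not supply its own proof of this statement: Theorem \ref{Nov050333p} is simply quoted as Lemma 2.7 of \cite{DR14}, with no argument given. So there is nothing in the present paper to compare your proposal against.

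That said, your approach is essentially the standard one and matches the argument in \cite{DR14} and \cite{Urb14}: decompose the nearly overconvergent form via Proposition \ref{Nov061021p}, then kill the higher-derivative pieces using the commutation $U_p \circ \delta_m = p\,\delta_m \circ U_p$, so that iterated application of $U_p$ introduces unbounded powers of $p$ on every summand except $g_0$. Your identification of the bookkeeping with the variable $X$ as the main technical point is accurate; in the literature this is handled either by Urban's explicit description of $U_p$ on polynomial $q$-expansions (where $U_p$ acts on $X$ by $X \mapsto pX$, which is exactly what makes the $mXf$ term behave correctly), or, as you suggest, by the slope-raising interpretation of $\delta_m$. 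One small caveat: your invocation of Proposition \ref{Nov061021p} requires $k > 2s$, and you should say a word about why this can always be arranged or why the argument still goes through otherwise (e.g.\ by working with polynomial $q$-expansions directly rather than the $\delta$-decomposition).
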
 
	 Thus, taking ordinary projections of \emph{nearly overconvergent} modular forms reduces to 
	 taking ordinary projections of \emph{overconvergent} modular forms. 
	\\ 
	
	As explained in Section 3.3.6 of \cite{Urb14} (see also Appendix II of \cite{AI21} for Urban's erratum to \cite{Urb14}), the Atkin operator $U_p$ is also completely continuous 
	when viewed as an endomorphism of $M_{k}^{\text{n-oc}}(K,N; r;s)$, 
	for $0< \ord(r) < \frac{1}{p+1}$. 
	Hence, we obtain a decomposition of $M_{k}^{\text{n-oc}}(K,N; r;s)$ similar to that of Equation (\ref{23Jul120422p}). 
	This means that we may also similarly speak of slope $\sigma$ projections $e_{\text{slope } \sigma}(f)$ 
	for nearly overconvergent modular forms $f$.

	\subsection{Ordinary projections of overconvergent modular forms} \label{OrdProjOCMFs} 
	As we are interested in performing explicit computations in this paper, 
	we will approximate our overconvergent modular forms in $M^{\text{oc}}_{k}(B, N, \chi ; r)$ 
	by truncated power series (i.e. polynomials) modulo $p^{m}$, in $\Z[[q]]/(q^{h}, p^{m})$ 
	for some $m \in \N$ and $h=h(m,N,k,\chi) \in \N$. 
	While one can attempt to keep track of the precision lost at various steps of our computations, 
	in order to obtain a final error bound for our calculations, 
	we do not give provably correct error bounds. 
	Instead, we use a simpler ad hoc approach to measure the precision of our outputs ($p$-adic numbers): 
	we simply run our algorithm multiple times, to different precisions, and see 
	by what power of $p$ they differ. 
	\\ 
	
	We first explain how to write down the Katz expansion of an overconvergent modular 
	form in $M^{\text{oc}}_{k}(B, N, \chi ; r)$, with $\ord(r)=\frac{1}{p+1}$, as well as a matrix representing $U_p$. 
	More details can be found in Section 3 of \cite{Lau11}. 
	Note that by the inclusion in (\ref{dec181104p}), the space $M^{\text{oc}}_{k}(B, N, \chi ; r)$ 
	contains modular forms of growth rate having valuation $\frac{p}{p+1}$. 
	Picking a row-reduced basis $B_i$ for each $A_{k+i(p-1)}(\Z_p, N)$, 
	we obtain the sets: 
		\begin{equation} \label{25jan141045p}
		\text{Kb}:= 
				\left \{ \frac{ r^{i} \cdot b}{E_{p-1}^{i}} \mod (q^{h’ p}, p^{m’}) : 
					b \in B_i, i = 1,..., \left \lfloor \frac{(p+1)m}{p-1} \right \rfloor\right \}, 
		\end{equation}
		\begin{equation}\label{25jan141046p}
		S := 
				\left \{ U_p \left(\frac{ r^{i} \cdot b}{E_{p-1}^{i}}\right) \mod (q^{h’}, p^{m’}) : 
					b \in B_i, i = 1,..., \left \lfloor \frac{(p+1)m}{p-1} \right \rfloor\right \}, 
		\end{equation}
	for an appropriate choice of $h'$ and $m'$ depending on $m$. 
	We call $\text{Kb}$ the \emph{Katz basis}. 
	Let $d$ be its size and write $\text{Kb}=\{v_1,...,v_d\}$. 
	Any overconvergent modular form of growth condition having valuation $\frac{1}{p+1}$, 
	when reduced modulo $(q^{h’ p}, p^{m’})$, can be expressed as a linear combination in $\text{Kb}$. 
	
	Let $E$ and $T$ be the $d \times h’$ matrices formed by 
	taking the elements of Kb and $S$ respectively and looking at the first $h’$ terms in their $q$-expansions.  
	Compute the $d \times d$ matrix $A’$ such that $T= A’ E$. 
	Then, $A:= A’ \mod p^{m}$ is the representation of the operator $U_p$ in the Katz basis. 
	We write $A=[U_p]_{\text{Kb}}$. 
	\begin{rmk}\label{25jan141123p}
	In practice (cf. Algorithm 1 of \cite{Lau11}), 
	we may perform a simplification to the above algorithm. 
	Indeed, Lauder explains in Section 3.2.1 of \emph{op. cit.} that we may replace $\text{Kb}$ and $S$, 
	in (\ref{25jan141045p}) and (\ref{25jan141046p}), by 
		\[\text{Kb}:= 
				\left \{ \frac{ p^{\left \lfloor \frac{i}{p+1} \right \rfloor} \cdot b}{E_{p-1}^{i}} \mod (q^{h’ p}, p^{m’}) : 
					b \in B_i, i = 1,..., \left \lfloor \frac{(p+1)m}{p-1} \right \rfloor\right \}, \] 
		\[S := 
				\left \{ U_p \left(\frac{ p^{\left \lfloor \frac{i}{p+1} \right \rfloor} \cdot b}{E_{p-1}^{i}}\right) \mod (q^{h’}, p^{m’}) : 
					b \in B_i, i = 1,..., \left \lfloor \frac{(p+1)m}{p-1} \right \rfloor\right \}. \] 
	This allows us to essentially work over $\Z_p$ instead of $B$, while still obtaining correct outputs, 
	up to some precision (which can then be verified at the end). 
	Similarly, in Section 2.1.1 of \cite{Lau14}, 
	the author explains that $r$ plays a purely auxiliary role, 
	and that from a computational point of view, one can alternatively let a $p^{\alpha}$-overconvergent modular form, 
	for any $\alpha \in \Q_{>0}$, be a series 
	of the form $\sum_{i=0}^{\infty} p^{\left \lfloor \alpha i \right \rfloor} \frac{b_i}{E_{p-1}^{i}}$ 
	with $b_i \in A_{k+i(p-1)}(\Z_p,N)$, similarly to Proposition \ref{OCMFs3}. 
	This space would be denoted by $M^{\text{oc}}_{k}(\Z_p, N, \chi ; p^{\alpha})$, even when $p^{\alpha} \not \in \Z_p$. 
	Following Lauder's approach, we will assume for our computations that $r$-overconvergent modular forms, 
	for $r \in B$ with $\ord(r) =1/(p+1)$, can be expressed as elements of $M^{\text{oc}}_{k}(\Z_p, N, \chi ; p^{1/(p+1)})$. 
	\end{rmk}
	
	The advantage of the above algorithm is that we only need to compute $U_p$ once on the Katz basis 
	and then we will be able to apply the Atkin operator as many times as we wish without having to actually use its original definition. 
	Given an overconvergent modular form $f$ of 
	growth condition $r$, with $\ord(r)=\frac{p}{p+1}$, 
	we can express it as a sum 
		\begin{equation} \label{Nov110407p} 
			f = \sum_{i} \epsilon_{i}  v_i \mod (q^{h’ p}, p^{m’}). 
		\end{equation} 	
	Write $[f]_{\text{Kb}}:=(\epsilon_1,...,\epsilon_d)$ 
	and compute $A [f]_{\text{Kb}}$. 
	Letting $\gamma_{i}$ denote the entries of $[U_p(f)]_{\text{Kb}}$, 
	we finally obtain  
		\begin{equation} \label{23sep250136a}
			U_p(f) = \sum_{i} \gamma_{i} v_{i} \mod (q^{h’}, p^{m}). 
		\end{equation} 
	Thus, we have $[U_p(f)]_{\text{Kb}} =A[f]_{\text{Kb}}$. 
	For more details on the correctness Equation (\ref{23sep250136a}), 
	see Section 2.2.2 of \cite{Lau14} and the last paragraph of Section 3.2.1 in \cite{Lau11}. 
	\begin{rmk} \label{Nov110411p}
	Note that the valuation of the growth rate of 
	the overconvergent modular form $f$ in Equation (\ref{Nov110407p}) is $\frac{p}{p+1}$ 
	instead of just $\frac{1}{p+1}$. 
	Although we can write an $r$-overconvergent modular form $\psi$, with $\ord(r)=\frac{1}{p+1}$, in the Katz basis, 
	and $A=[U_p]_{\text{Kb}}$ in the same basis, we cannot directly apply $A$ to $[\psi]_{\text{Kb}}$, 
	as explained in Section 2.2.2 of \cite{Lau14}. 
	Indeed, the coefficients in the expansion of $[\psi]_{\text{Kb}}$ 
	are not guaranteed to decay fast enough ($p$-adically) 
	for our calculations to be accurate and for Equation (\ref{23sep250136a}) to hold. 
	This issue is entirely avoided when the valuation of the growth rate of $\psi$ is $\frac{p}{p+1}$. 
	Thus, when dealing with an overconvergent form $\psi$ of growth rate having valuation $\frac{1}{p+1}$, 
	we have to first compute $U_p(\psi)$ directly (without using the matrix representation $A$ of $U_p$) 
	to obtain an overconvergent form with growth rate of valuation $\frac{p}{p+1}$, 
	thus improving its overconvergence and decay properties. 
	After that, we may apply $A$ to $[U_p(\psi)]_{\text{Kb}}$. 
	\end{rmk}

	To compute ordinary projections $e_{\text{ord}} := \lim U_{p^{n!}}$ of overconvergent modular forms, 
	we pick a big enough $R \in \N$ (cf. Algorithm 2.1 in \cite{Lau14}) 
	such that $A^{R}$ represents $e_{\text{ord}}$ to our desired level of precision. 
	Given an overconvergent modular form $f$, of growth condition having valuation $\frac{p}{p+1}$, 
	written as $\sum_{i} \alpha_{i} v_{i}$ modulo $(q^{h’}, p^{n})$, 
	we compute $\gamma:= A^{R} [f]_{\text{Kb}}$ and let $\gamma_{i}$ denote the entries of $\gamma$. 
	Finally, we obtain 
		\[e_{\text{ord}}(f) = \sum_{i} \gamma_{i} v_{i} \mod (q^{h’}, p^{n}). \] 
 
\section{Algorithmic methods} \label{ExplicitAlgorithmicMethods}

\subsection{Ordinary projections of nearly overconvergent modular forms} \label{NOCProjections} 
	For simplicity, let $\text{d}$ denote the Serre operator $q \frac{\text{d}}{\text{d}q}$. 
	Let $g,h$ be two classical modular forms with coefficients in $\Z$, of weights $\ell,m$ respectively, 
	and let $H:=  \d^{-(1+t)}(g^{[p]}) \times h$, for some integer $t$ with $0 \le t \le \min \{\ell,m\} -2$. 
	We wish to compute 
		\[ \X := e_{\mathrm{ord}} (H) = e_{\mathrm{ord}} \left( \d^{-(1+t)}(g^{[p]}) \times h \right). \]  
	By Theorem 5 of \cite{Ser73}, $\d^{-(1+t)}(g^{[p]})$ is a $p$-adic modular form of weight $\l - 2(1+t)$, 
	so $\X$ is a $p$-adic modular form of weight $\l + m - 2t - 2$. 
	The condition $0 \le t \le \min \{\ell,m\} -2$ ensures that $\X, g$ and $h$ are balanced, 
	i.e. the largest weight is {strictly} smaller than the sum of the other two.  
	If we had that $t=\l -2$, the form $H:=  \d^{-(1+t)}(g^{[p]}) \times h$ would have been overconvergent, 
	as discussed in Section 2.3.2 of \cite{Lau14} (in the paragraph preceding Note 2.3). 
	More precisely, when $t=\l -2$, we have $H \in M_{\l + m - 2t - 2}^{\text{oc}}(K,N; r)$, for any $r \in B$ with $\ord(r) < 1/(p+1)$. 
	Furthermore, for our computations, we follow Lauder's approach (justified in 
	Note 2.3 in \emph{op. cit.}) and make the assumption that $H$ is integral 
	and can also be written as an element of $M^{\text{oc}}_{\l + m - 2t - 2}(\Z_p, N; p^{1/(p+1)})$, 
	as in Remark \ref{25jan141123p}. 
	This was not contradicted by our experiments.
	
	Since $H$ is not necessarily overconvergent in general, 
	we cannot directly use the methods introduced in \cite{Lau14} 
	to compute the ordinary projection $e_{\mathrm{ord}} (H)$. 
	However, $H$ is nearly overconvergent (Proposition 2.9 in \cite{DR14}) 
	and Theorem \ref{Nov050333p} tells us that 
		\[e_{\mathrm{ord}} (H) = e_{\mathrm{ord}} \left( \pi_{\text{oc}} (H) \right) 
				= e_{\mathrm{ord}} \left( \pi_{\text{oc}} \left( \d^{-(1+t)}(g^{[p]}) \times h \right) \right), \]
	where $\pi_{\text{oc}}$ is the overconvergent projection operator. 
	Since $ \pi_{\text{oc}} (H) $ is overconvergent, by definition, 
	we can follow the methods described in \cite{Lau14} 
	to compute its ordinary projection, thus obtaining $e_{\mathrm{ord}} \left( \pi_{\text{oc}} (H) \right)=e_{\mathrm{ord}} (H)$.  
	We therefore turn our attention to computing $\pi_{\text{oc}} (H)$. 
	Note that we are not actually interested in taking the overconvergent projection of any nearly overconvergent modular form; 
	we are specifically computing $\pi_{\text{oc}} ( \d^{-(1+m)}(g^{[p]}) \times h )$. 
	We therefore use a trick (see Theorem \ref{18feb0344p}) 
	that specifically applies to our setting. 
	\\ 
	
	Set $G := \d^{1-\l} g^{[p]}$, it is an overconvergent modular form 
	of weight $2-\l$. 
	Let $n=\l-2-t \ge 0$ so that $\d^{-1-t}g^{[p]}=\d^{n} G$ 
	and $\pi_{\text{oc}}  \left( \d^{-1-t}(g^{[p]}) \times h \right) = \pi_{\text{oc}}  ((\d^{n} G ) \times h)$. 
	Consider the Rankin-Cohen bracket 
		\begin{equation} \label{dec191254a}
			[G, h]_n = \sum_{i,j \ge 0, i+j = n} (-1)^{j} {{(2-\l) + n - 1} \choose{j}} {{m + n - 1}\choose{i}} \d^i(G) \d^j(h). 
		\end{equation} 
	Note that the individual terms in this sum are all $p$-adic modular forms of weight $\l + m - 2t - 2$ 
	that are not necessarily overconvergent. 
	However, the entire sum $[G, h]_n$ is overconvergent. 
	It turns out that the Rankin-Cohen bracket is closely related to the overconvergent projection operator. 
	\begin{theorem} \label{18feb0344p}
		Let $\psi_1,\psi_2$ be overconvergent modular forms of weights $\kappa_1$ and $\kappa_2$ respectively, 
		then, for all $j \ge 0$, 
			\[[\psi_1,\psi_2]_j 
				= { {\kappa_1+\kappa_2+ 2j-2} \choose {j}} 
					\pi_{\text{oc}}  ((\text{d}^{j} \psi_1 ) \times \psi_2). \] 
	\end{theorem}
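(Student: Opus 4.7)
The plan is to work within Urban's structural framework for nearly overconvergent forms (Proposition 2.9), which writes any nearly overconvergent form of weight $k$ and order at most $s$ (with $k>2s$) uniquely as $\sum_{i=0}^s\delta_{k-2i}^i(g_i)$ with each $g_i$ overconvergent, and defines $\pi_{\text{oc}}$ to extract $g_0$. Equivalently, $\pi_{\text{oc}}$ is the unique projector satisfying $\pi_{\text{oc}}\circ\delta=0$. The first step is to verify that $(\d^s\phi_1)\cdot\phi_2$ is indeed nearly overconvergent of order $\le s$: since $\delta_k=\d+kX$ on polynomial $q$-expansions, we have $\d=\delta_k-kX$, so iterating produces $\d^s\phi_1$ as a nearly overconvergent form of order $\le s$, and multiplying by the overconvergent $\phi_2$ preserves this order. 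The theorem is then equivalent to exhibiting
\[\binom{\kappa_1+\kappa_2+2s-2}{s}(\d^s\phi_1)\cdot\phi_2 \;-\; [\phi_1,\phi_2]_s \;\in\;\bigoplus_{i\ge 1}\delta^i\!\bigl(M^{\text{oc}}\bigr).\]

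The core computation expresses the left-hand side as an explicit sum $\sum_{i=1}^s\delta^i(h_i)$ of $\delta$-derivatives of overconvergent forms. To do so, I would expand $\d^s\phi_1\cdot\phi_2$ by repeatedly applying (i) the triangular relation $\d^s\phi_1=\sum_{j=0}^s d_{s,j}\,X^{s-j}\,\delta_{\kappa_1}^j\phi_1$ (with $d_{s,s}=1$, the remaining $d_{s,j}$ obtained by inverting $\delta_k=\d+kX$), and (ii) the Leibniz rule $\delta_{\kappa_1+\kappa_2}(\phi_1\phi_2)=(\delta_{\kappa_1}\phi_1)\phi_2+\phi_1(\delta_{\kappa_2}\phi_2)$, which allows one to trade each $(\delta^j\phi_1)\phi_2$ against $\delta^j(\phi_1\phi_2)$ modulo mixed products $(\delta^a\phi_1)(\delta^b\phi_2)$ with $b\ge 1$. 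Applying the same moves recursively to these mixed products, and using Urban's decomposition to absorb residual factors of $X$ into further $\delta^i(\cdot)$'s, the output is an expression for $\pi_{\text{oc}}((\d^s\phi_1)\phi_2)$ as a specific linear combination of products $\d^a\phi_1\cdot\d^b\phi_2$ with $a+b=s$ -- exactly the shape of $[\phi_1,\phi_2]_s$.

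The main obstacle is the combinatorial identity: one must verify that the coefficient of $\d^a\phi_1\cdot\d^b\phi_2$ arising from this reorganization equals $\tfrac{1}{\binom{\kappa_1+\kappa_2+2s-2}{s}}(-1)^b\binom{\kappa_1+s-1}{b}\binom{\kappa_2+s-1}{a}$, matching the definition of $[\phi_1,\phi_2]_s$. The cleanest route is a generating-function argument: work formally with $e^{t\delta_{\kappa_1}}\phi_1$ and $e^{-t\delta_{\kappa_2}}\phi_2$, use Leibniz to relate $e^{t\delta}(\phi_1\phi_2)$ to $(e^{t\delta_{\kappa_1}}\phi_1)(e^{t\delta_{\kappa_2}}\phi_2)$, and extract the overconvergent part of the coefficient of $t^s$. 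A Vandermonde-type summation then produces both the binomial weights of the Rankin--Cohen bracket and the normalizing factor $\binom{\kappa_1+\kappa_2+2s-2}{s}$ simultaneously. A direct induction on $s$ is an alternative but becomes bookkeeping-heavy.
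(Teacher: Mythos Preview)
The paper does not actually prove this theorem: its entire argument is the one-line citation ``This follows from Section 4.4 of \cite{LSZ20}.'' So there is no in-paper proof to compare against, only an external reference.

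Your plan is a correct and essentially self-contained route to the result, and it is in the same spirit as the standard argument (and presumably as what is in \cite{LSZ20}): identify $\ker\pi_{\text{oc}}$ with $\bigoplus_{i\ge 1}\delta^{i}(M^{\text{oc}})$ via Urban's unique decomposition, observe that $[\phi_1,\phi_2]_s$ is overconvergent so is fixed by $\pi_{\text{oc}}$, and then show that the difference $\binom{\kappa_1+\kappa_2+2s-2}{s}(\d^s\phi_1)\phi_2-[\phi_1,\phi_2]_s$ lies in that kernel. The Leibniz rule for $\delta$ and the triangular relation between $\d^s$ and $\delta^s$ are exactly the right tools, and your generating-function suggestion (exponentiating $\delta$) is the clean way to package the combinatorics; this is the classical Zagier-style proof that holomorphic projection of $(\delta^s\phi_1)\phi_2$ is a multiple of the Rankin--Cohen bracket, transported to the $p$-adic/overconvergent setting where $\pi_{\text{oc}}$ plays the role of holomorphic projection. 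One small point to keep straight when you write it up: Urban's uniqueness statement (Proposition~\ref{Nov061021p} here) requires the weight condition $k>2s$, i.e.\ $\kappa_1+\kappa_2>0$, so you should either note that this hypothesis is implicit or remark that the identity of $q$-expansions then extends formally. Otherwise your outline is sound and more informative than the paper's citation.
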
 
	This follows from Section 4.4 of \cite{LSZ20} (see also Theorem 1 in \cite{Lan08}). We thus obtain the following Corollary. 

	\begin{cor} \label{18feb0344p2}
		We can relate $[G, h]_n$ and $\pi_{\text{oc}} ((\text{d}^{n} G ) \times h)$ as follows 
			\[[G, h]_n = { {-\l + m + 2n} \choose {n}} \pi_{\text{oc}} ((\text{d}^{n} G ) \times h). \] 
	\end{cor}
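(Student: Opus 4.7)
The plan is to apply Theorem \ref{18feb0344p} directly with the appropriate choice of inputs, so the task reduces to identifying the weights $\kappa_1,\kappa_2$ and the integer $s$ correctly.

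First I would record that $G := \d^{1-\l} g^{[p]}$ is overconvergent of weight $\kappa_1 = 2-\l$, which follows immediately from the map in \eqref{22sep261224p} applied with $a = \l - 1 \ge 0$ (valid since $g$ has weight $\l \ge 2$, as $0 \le t \le \min\{\l,m\}-2$). Next, $h$ is a classical modular form of weight $m$, hence in particular overconvergent of weight $\kappa_2 = m$. So both inputs to the Rankin--Cohen bracket are overconvergent, and Theorem \ref{18feb0344p} applies with $\phi_1 = G$, $\phi_2 = h$, and $s = n$.

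Substituting into the formula of Theorem \ref{18feb0344p}, the binomial coefficient becomes
\[
\binom{\kappa_1+\kappa_2+2s-2}{s} = \binom{(2-\l)+m+2n-2}{n} = \binom{-\l+m+2n}{n},
\]
and the identity
\[
[G,h]_n = \binom{-\l+m+2n}{n}\, \pi_{\text{oc}}\bigl((\d^{n} G)\times h\bigr)
\]
follows at once.

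There is essentially no obstacle here: the corollary is a direct specialization of Theorem \ref{18feb0344p}. The only point requiring any care is confirming that the weight of $G$ is $2-\l$ (not, say, $-\l$ or $\l-2$) and that \eqref{22sep261224p} indeed preserves overconvergence after $p$-depletion, so that the hypotheses of Theorem \ref{18feb0344p} are truly satisfied. Once the weight bookkeeping is in order, the arithmetic inside the binomial coefficient is immediate.
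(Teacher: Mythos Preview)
Your proposal is correct and is exactly the approach the paper takes: the corollary is stated immediately after Theorem \ref{18feb0344p} with the words ``We thus obtain the following Corollary,'' and your specialization $\phi_1=G$ (weight $2-\l$), $\phi_2=h$ (weight $m$), $s=n$ is precisely what is intended. One tiny quibble: the map in \eqref{22sep261224p} requires $a\ge 1$, not $a\ge 0$, but since $\l\ge 2$ gives $a=\l-1\ge 1$ this is harmless.
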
 
	
	Thus, we can simply compute $[G, h]_n$ using Equation (\ref{dec191254a}) to obtain $ \pi_{\text{oc}} (H)$. 
	
	\begin{rmk}
		Note that we had to pass through $G$ instead of using $g$ directly 
		as we cannot have the subscript $j$ of the Rankin-Cohen bracket $[\cdot,\cdot]_j$ be negative. 
		Moreover, since the modular forms $\X, g, h$ are balanced, ${ {-\l + m + 2n} \choose {n}}$ cannot be zero. 
	\end{rmk}

\subsection{Eigenspace projections} \label{22sep180911p}

	In the previous sections, we have seen how to compute ordinary projections, 
	i.e. projections over the space of overconvergent modular forms of slope zero. 
	We now consider taking more general projections. 
	
	For all $\sigma \in \Q_{\ge 0}$, 
	the $U_p$ equivariant decomposition of $M_k^{\text{oc}}(N)$, described in Equation (\ref{23Jul120422p}), 
	allows us to express any form $H$ as a sum $H= F_{\sigma} + F$, 
	where $F_{\sigma} \in M_k^{\text{oc}}(N)^{\text{slope } \sigma}$ and $F \in X_{\sigma}$. 
	We call $F_{\sigma}$ the projection of $H$ onto the space of slope $\sigma$, 
	or the slope $\sigma$ projection of $H$. 
	Consider now the generalised eigenspace associated to a single eigenvalue $\mu$ 
	such that $\text{ord}_p(\mu)=\sigma$. 
	We will explain how to project modular forms onto such an eigenspace. 
	This method has been used in \cite{DL21} 
	and is based on an insight of David Loeffler (see the last paragraph of Section 6.3 of \cite{LSZ20}). 
	We call such a projection the \emph{eigenspace $\mu$ projection}. 
	This can be seen as a special case of the slope $\sigma$ projection, as these two notions 
	would agree in the case where $U_p$ only has one eigenvalue $\mu$ of valuation $\sigma$.

	Let $A$ denote the matrix computed in Section \ref{OrdProjOCMFs}, 
	representing the $U_p$ operator acting on the Katz basis of $M^{\text{oc}}_{k}(\Z_p, N, \chi ; p^{1/(p+1)})$ 
	(see Remark \ref{25jan141123p}). 
	Let $\mu$ be an eigenvalue for $U_p$ 
	and let $M=M_{\mu}:= A-\mu \text{Id}$. Put $M_\mu$ in \emph{Smith normal form}, 
	i.e. let $P$ and $Q$ be invertible matrices such that 
		\begin{equation*} 
			Q M_\mu P = D = \text{diag} \left(a_1(\mu),...,a_{s-1}(\mu), a_{s}(\mu) \right), 
		\end{equation*} 
	where $a_1(\mu) | a_2(\mu) | ... | a_s(\mu)$. 
	We now remark that $a_s(\mu)$ should be zero, as $\mu$ is an eigenvalue for $U_p$. 
	However, $A$ is only an approximation for $U_p$. 
	More precisely, $A \in M_{d\times d}(\Z/p^{m}\Z)$ 
	is equal to $U_p$ modulo $p^{m}$. 
	And so, $a_s(\mu)$ will only be zero in $\Z/p^{m}\Z$. 
	Moreover, the case $a_{s-1}(\mu)=0$ 
	happens precisely when $\mu$ has algebraic multiplicity (as an eigenvalue 
	of $U_p$) more than one.
	Assume henceforth that we are dealing with an eigenvalue $\mu$ of algebraic multiplicity one, 
	i.e. that the generalised $\mu$-eigenspace is one-dimensional. 
	\\

	Let $f_{\mu}$ be an eigenform lying in the one-dimensional generalised $\mu$-eigenspace. 
	Assume that $f_{\mu}$ is integral 
	and can be written as an element of $M^{\text{oc}}_{k}(\Z_p, N, \chi ; p^{p/(p+1)})$. 
	Let $\pi= \pi({\mu})$ denote the last row of $Q \in M_{d\times d}(\Z/p^{m}\Z)$, i.e. $\pi_{i}=Q_{i,d}$ for $i=1,...,d$. 
	\begin{prop}\label{23Jun010413p}
		The vector $\pi$ is orthogonal to all $p/(p+1)$-overconvergent modular forms (written in the Katz basis $\emph{Kb}$) 
		not in the generalised $\mu$-eigenspace. 
	\end{prop}
	\begin{proof}
	As we are working with $p/(p+1)$-overconvergent modular forms, 
	we will be able to represent the action of $U_p$ on them by the matrix $A$ given in 
	Section \ref{OrdProjOCMFs}. 
	We start with the simplest case. 
	Let $f_{s}$ be an eigenform of $U_p$ with eigenvalue $s$, 
	such that $s \not = \mu$. 
	Then, $M[f_{s}]_{\text{Kb}}= (A-\mu \text{Id})[f_s]_{\text{Kb}}=(s-\mu)[f_s]_{\text{Kb}}$. 
	Hence, 
		\begin{equation} \label{dec070812}
			Q(s-\mu)[f_s]_{\text{Kb}} = QM[f_s]_{\text{Kb}} = DP^{-1} [f_s]_{\text{Kb}}. 
		\end{equation} 
	Since $\pi$ is the last row of $Q$ and the last row of $D$ is completely zero, Equation (\ref{dec070812}) gives 
		\begin{equation} \label{dec070814}
			(s-\mu) \pi [f_s]_{\text{Kb}} = \pi (s-\mu) [f_s]_{\text{Kb}} = 0. 
		\end{equation} 
	As $s \not = \mu$, we must have $\pi [f_s]_{\text{Kb}}=0$, 
	up to a certain level of precision, as is explained in Remark \ref{dec190104p}. 
	This shows that any eigenform of $U_p$, with eigenvalue not $\mu$, is orthogonal to $\pi$. 
	
	A similar argument applies to generalised eigenform. 
	Let $F_s$ be a generalised eigenform for the eigenvalue $s$, again with $s \not = \mu$. 
	There exists some minimal integer $r \in \N$ such that $(A-s \text{Id})^{r} [F_s]_{\text{Kb}}=0$. 
	Let $M_{s}:= A-s \text{Id}$, so that $M_s^{r}[F_s]_{\text{Kb}}=0$. Then, 
		\begin{align*}
			\left( M- M_s \right)^{r}[F_s]_{\text{Kb}} 
					= M \sum_{i=0}^{r-1} {r \choose i} (-1)^{i} M^{r-1-i} M_s^{i} [F_s]_{\text{Kb}}. 
		\end{align*} 
	Therefore, $\left( M- M_s \right)^{r}[F_s]_{\text{Kb}} = M C [F_s]_{\text{Kb}}$, where $C:=\sum_{i=0}^{r-1} {r \choose i} (-1)^{i} M^{r-1-i} M_s^{i}$. 
	Now, $\left( M- M_s \right)^{r}=(s-\mu)^r \text{Id}$, hence 
		\begin{equation} \label{dec070919}
			(s-\mu)^{r} \cdot Q [F_s]_{\text{Kb}} = Q \left( M- M_s \right)^{r}[F_s]_{\text{Kb}} = Q M C [F_s]_{\text{Kb}} = DP^{-1}C [F_s]_{\text{Kb}}. 
		\end{equation} 
	And as above, Equation (\ref{dec070919}) gives 
		\begin{equation} \label{dec070920}
			(s-\mu)^{r} \pi [F_s]_{\text{Kb}} =  0. 
		\end{equation} 
	Finally, since $s \not = \mu$, we have $\pi [F_s]_{\text{Kb}} =0$, 
	up to a certain level of precision (see Remark \ref{dec190104p}). 
	That is, $\pi$ must be orthogonal to all overconvergent modular forms not in the generalised $\mu$-eigenspace. 
	\end{proof} 
		\begin{rmk} \label{dec190104p}
		Following 
		Equations (\ref{dec070814}) and (\ref{dec070920}), 
		we say that $\pi [f_s]_{\text{Kb}}$ and $\pi [F_s]_{\text{Kb}}$ are zero, up to some (potentially lower) precision. 
		Indeed, as we are working over $\Z/p^{m}\Z$ for some $m \in \Z$, 
		Equation (\ref{dec070920}) actually becomes $p^{m} | (s-\mu)^{r} \pi [f_s]_{\text{Kb}}$, 
		which does not necessarily imply that $p^{m} | \pi [f_s]_{\text{Kb}}$. 
		Therefore, there is a loss of precision of $r \cdot \text{ord}_p(s-\mu)$. 
		This specific loss of precision can be bounded above by looking at the largest non-zero entry of $D$, 
		since $\text{ord}_p(s-\mu) \le \max_{i} \text{ord}_p(D_{i,i})$. 
		To see this, using Equation (\ref{dec070812}), write 
			\[(s-\mu) \ \text{row}_i(Q) \cdot [f_s]_{\text{Kb}} = D_{i,i} \ \text{row}_i(P^{-1}) \cdot [f_s]_{\text{Kb}}. \] 
	\end{rmk} 
	
	We now explain how to compute the projection $e_{\text{eigenspace } \mu} (H)$ 
	of an overconvergent modular form $H$, 
	seen as a an element of $M^{\text{oc}}_{k}(\Z_p, N, \chi ; \frac{p}{p+1})$, 
	over the generalised $\mu$-eigenspace. 
	Since we are assuming that the generalised $\mu$-eigenspace is one dimensional, 
	we can also call this projection a projection over $f_{\mu}$ and write $e_{f_{\mu}}(H)$, 
	for any non-trivial modular form $f_{\mu}$ in the generalised $\mu$-eigenspace. 
	First, we have that 
		\begin{equation} \label{Dec030204a}
			H= \rho f_{\mu} + \sum_{s \not = \mu} F_{s}, 
		\end{equation} 
	for some constant $\rho$ and generalized eigenforms $F_{s}$ of eigenvalue $s \not = \mu$. 
	This gives us $\pi \cdot [H] = \rho ( \pi \cdot [f_{\mu}])$, by Proposition \ref{23Jun010413p}. 
	Now, since $\pi$ is non trivial, it cannot be orthogonal to all modular forms, 
	so $\pi \cdot [f_{\mu}]$ cannot also be zero. 
	We can hence define : 
		\begin{equation} \label{dec070936} 
			\lambda_{f_{\mu}}(H) := \rho = \frac{\pi \cdot [H]_{\text{Kb}}}{ \pi \cdot [f_{\mu}]_{\text{Kb}}}. 
		\end{equation} 
	More formally, $\lambda_{f_{\mu}}$ 
	is the unique Hecke-equivariant linear functional that factors through the Hecke eigenspace associated to $f_{\mu}$
	and is normalised to send $f_{\mu}$ to $1$ (cf. Definition 2.7 in \cite{Loe18}). 
	And the projection of $H$ over $f_{\mu}$ is 
	simply $e_{\text{eigenspace } \mu} (H)=e_{f_{\mu}}(H):= \lambda_{f_{\mu}}(H) \ f_{\mu}$. 
	
	As explained in Remark \ref{Nov110411p}, 
	this holds under the assumption that $H$ has growth condition $\frac{p}{p+1}$. 
	In the case where $H$ has growth condition $\frac{1}{p+1}$, 
	we need to first apply the Atkin operator to $H$ to obtain a modular form $U_p(H)$ of growth rate $\frac{p}{p+1}$, 
	as in Remark \ref{Nov110411p}. 
	Indeed, write $H$ as a sum $H= \rho f_{\mu} + \sum_{s \not = \mu} F_{s}$, 
	as in Equation (\ref{Dec030204a}). 
	Then, 
		\begin{align*} 
				U_p(H) = \rho \mu f_{\mu} + \sum_{s \not = \mu} U_p(F_{s}). 
		\end{align*} 
	Since the action of $U_p$ preserves the eigenspaces of $M_k^{\text{oc}}(B,N)$, 
	and we are assuming that overconvergent forms over $B$ 
	can be well approximated by forms over $\Z_p$ (Remark \ref{25jan141123p}), 
	we get that 
	$\pi \cdot [U_p(F_s)]_{\text{Kb}}=0$ for $s \not = \mu$, up to some precision (which can then be verified experimentally). 
	So $\pi \cdot [U_p(H)]_{\text{Kb}} = \rho \mu \  \pi \cdot [f_{\mu}]_{\text{Kb}}$. 
	Finally, 
		\[\lambda_{f_{\mu}}(U_p(H)) = \rho \mu =\mu \ \lambda_{f_{\mu}}(H). \] 
	We thus obtain 
		\begin{equation} \label{Nov081257a}
			\lambda_{f_{\mu}}(H) = \frac{ \pi \cdot [U_p(H)]_{\text{Kb}}}{\mu \  \pi \cdot [f_{\mu}]_{\text{Kb}}}. 
		\end{equation} 
	\begin{rmk}
	In the case where the eigenvalue $\mu$ has multiplicity $r$ greater than one, 
	the generalised eigenspace associated to $\mu$ will contain generalised eigenforms other than the one we are projecting on. 
	The method we presented here will thus not work 
	because the projection $e_{\text{eigenspace }\mu}$ 
	over $\mu$-eigenspace is not equal to $e_{f_{\mu}}$ anymore. 
	In this case, one needs to use the last $r$ rows of $Q$ and the other Hecke operators 
	in order to find a system of equations to solve and obtain $\lambda_{f_{\mu}}(H)$. 
	
	As a simple example, assume that we already have a basis for the generalized $\mu$-eigenspace consisting of 
	normalised Hecke eigenforms $\{\mathfrak{f}_1,..., \mathfrak{f}_r\}$, with $\mathfrak{f}_1 = f_{\mu}$. 
	We then express the eigenspace $\mu$ projection of $H$ as a linear combination $\sum_{j} c_{j} \mathfrak{f}_j$. 
	Using the last $r$ rows $\pi_1,...,\pi_r$ of $Q$, 
	we obtain a system of equations $\pi_i \cdot [H] = \sum_{j} c_{j}\ \pi_i \cdot [\mathfrak{f}_j]$. 
	This can easily be solved in order to find $c_1=\lambda_{f_{\mu}}(H)$. 
	The author has not yet implemented this method. 
	\end{rmk} 
	
\section{Triples of modular forms} \label{NewLfunction}
	To a normalised Hecke eigenform $f$ of weight two and level $N$, 
	one can associate a differential $\omega_f \in H_{\text{dR}}^{1}(X_1(N))$. 
	In general, when $f$ has weight $r+2$ and level $\Gamma_1(N)$, 
	one can associate to it a differential $\omega_f \in \text{Fil}^{r+1}H_{\text{dR}}^{r+1}(\mathcal{E}^{r}/\C_p)$, 
	where $\mathcal{E}$ is the universal generalised elliptic curve fibered over $X_1(N)$, 
	and $\mathcal{E}^{r}$ is the Kuga-Sato variety as in \cite{Sch90}. 
	Note that the $f$-isotypic component of $H_{\text{dR}}^{r+1}(\mathcal{E}^{r}/\C_p)$, 
	denoted $H_{\text{dR}}^{r+1}(\mathcal{E}^{r}/\C_p)_{f}$, is two dimensional. 
	Assume now that $f$ is ordinary at $p$. 
	This implies the existence of a one dimensional subspace (the \emph{unit root subspace}) on which the 
	Frobenius endomorphism acts as multiplication by a $p$-adic unit.
	We can then pick a unique element $\eta_{f}^{\text{u-r}}$ in this unit root subspace to 
	extend $\{\omega_f\}$ to a basis $\{\omega_f,\eta_{f}^{\text{u-r}}\}$ 
	such that $\< \omega_{f^{*}}, \eta_{f}^{\text{u-r}}\>=1$, 
	where $\< \cdot , \cdot \>$ is the 
	Poincaré duality pairing on $H_{\text{dR}}^{r+1}(\mathcal{E}^{r}/\C_p)$ and ${f^{*}}:= {f} \otimes \chi_f^{-1}$ 
	(cf. Section 1 of \cite{DR14} and Section 6.1 of \cite{KLZ20}). 
	\\ 
	
	Let $f,g,h$ be three normalised cuspidal modular eigenforms (for all the Hecke operators) of level $N$, 
	respective weights $k,\l,m$ and respective characters $\chi_f,\chi_g,\chi_h$. 
	Fix a prime $p \ge 5$ and assume that $p \nmid N$, 
	that $\chi_f \chi_g \chi_h=1$ and that the weights $k,\l,m$ are balanced, i.e. 
	the largest one is {strictly} smaller than the sum of the other two. 
	The assumption $p \ge 5$ is purely for simplicity and could potentially be relaxed at the cost of some extra care. 
	Let $\alpha_{f},\beta_{f}$ be the roots of the \emph{Hecke polynomial} 
			$x^{2} - a_p(f) x + p^{k-1} \chi_f(p)$. 
	Assume that the modular forms $f,g$ and $h$ are \emph{ordinary}  
	at $p$, 
	so that $\alpha_{f},\alpha_{g}$ and $\alpha_{h}$ are units. 
	Let $f_\alpha$ and $f_\beta$ be the {$p$-stabilizations} of $f$ given by Equation (\ref{22Sep210442p}). 
	Let 
		\[t:= \frac{\l+m-k-2}{2} \ge 0, \qquad c:= \frac{k+ \l+m-2}{2}. \]
	We may then define the Euler factors: 
	\begin{align} \label{22Oct111155a}
	\begin{split}
	\begin{array}{ll}
			\mathcal{E}(f,g,h) := (1-\beta_f \alpha_g \alpha_h p^{-c}) (1-\beta_f \alpha_g \beta_h p^{-c}) (1-\beta_f \beta_g \alpha_h p^{-c}) (1-\beta_f \beta_g \beta_h p^{-c}); \\ 
			\tilde{\mathcal{E}}(f,g,h) := (1-\alpha_f \alpha_g \alpha_h p^{-c}) (1-\alpha_f \alpha_g \beta_h p^{-c}) (1-\alpha_f \beta_g \alpha_h p^{-c}) (1-\alpha_f \beta_g \beta_h p^{-c}) ; 
	\end{array} 
	\\
	\begin{array}{llllllll} 
			\mathcal{E}_0(f) := 1-\beta_f^{2} \chi_f^{-1}(p) p^{1-k}; & & &  \tilde{\mathcal{E}_0}(f) := 1-\alpha_f^{2} \chi_f^{-1}(p) p^{1-k}; & & & \\ 
			\mathcal{E}_1(f) := 1-\beta_f^{2} \chi_f^{-1}(p) p^{-k}; & & & \tilde{\mathcal{E}_1}(f) := 1-\alpha_f^{2} \chi_f^{-1}(p) p^{-k}. & & &
	\end{array}
	\end{split}
	\end{align}

\subsection{The Garrett-Rankin triple product $p$-adic $L$-function} \label{GarrettRankinTPPLF}
	In Section 2.6 of \cite{DR14}, the authors introduce the Garrett-Rankin triple product $p$-adic $L$-function 
	for Hida families $\mathbf{f}, \mathbf{g}, \mathbf{h}$ 
	interpolating $f_{\alpha}$, $g_{\alpha}$ and $h_{\alpha}$ at the weights $k,\ell$ and $m$. 
	The existence of such families is guaranteed by Hida’s construction in \cite{Hid86}. 
	This triple product $p$-adic $L$-function plays an important role in motivating our own $p$-adic symbol $\SS{f}{g}{h}$,  
	defined in the following section. 
	However, as we only define our symbol for integral weight, 
	we only need to describe the Garrett-Rankin triple product $p$-adic $L$-function when it is evaluated at integral weights. 
	
	Let ${f^{*}}:= {f} \otimes \chi_f^{-1}$ and let $f^{*}_{\alpha}$ be the ordinary $p$-stabilization of $f^{*}$. 
	Assume now that the action of the Hecke algebra 
	on the ordinary subspace in weight $k$ is semisimple 
	(cf. page $222$ in \cite{Hid93}). 
	This is the case for $N$ square-free, since $k \ge 2$ (cf. Section 2.3.1 in \cite{Lau14}). 
	This allow us to define (as in Section \ref{22sep180911p}) the unique 
	Hecke-equivariant linear functional $\lambda_{f^{*}_{\alpha}}$ on $S_k^{\text{oc}}(N)$ 
	that factors through the Hecke eigenspace associated to $f^{*}_{\alpha}$
	and that is normalised to send $f^{*}_{\alpha}$ to $1$. 
	
	\begin{defn}[Lemma 2.19, \cite{DR14}] \label{21Jun160521}
		The Garrett-Rankin triple product $p$-adic $L$-value 
		attached to the triple $({f,g,h})$ is defined as 
		\begin{equation} \label{Nov191250p}
				\L_{p} ({f,g,h}):= \lambda_{f^{*}_{\alpha}} \left(\text{d}^{-1-t}{g}^{[p]}  \times {h} \right ). 
			\end{equation} 
	\end{defn} 
	
	In order to experimentally compute the quantity defined in Equation (\ref{Nov191250p}), 
	the main ingredient is the computation of ordinary projections of  $p$-adic modular forms. 
	In \cite{Lau14}, parts of which have been summarized here in Section \ref{OrdProjOCMFs}, 
	the author explains how to calculate the ordinary projections of overconvergent modular forms, 
	and is thus able to compute Garrett-Rankin triple product $p$-adic $L$-values 
	for balanced weights $(k,\l,m)$ satisfying $k=2 + m - \l$. 
	Indeed, this condition guarantees that $\text{d}^{-1-t}({g}^{[p]}) \times {h}$ will be overconvergent, 
	thus the code and the theory in \cite{Lau14} are enough. 
	
	In general, however, when the weights $(k,\l,m)$ are only balanced, $\text{d}^{-1-t}({g}^{[p]}) \times {h}$ 
	is only \emph{nearly overconvergent}. 
	We therefore need to use the generalizations we introduced in Section \ref{NOCProjections}
	in order to compute ordinary projections of nearly overconvergent modular forms, 
	thus being able to compute Garrett-Rankin triple product $p$-adic $L$-values for any balanced classical weights. 
	\\ 
	
	In Section 3 of \cite{DR14}, the authors construct the generalised Gross-Kudla-Schoen diagonal 
	cycle $\Delta:=\Delta_{k,\l,m}$ 
	for a triple of balanced classical weights $(k,\l,m)$. 
	More precisely, this cycle is an element of the Chow group $\text{CH}^{r+2}(W)_{0}$ 
	where  $W:= \E^{k-2} \times \E^{\l-2} \times \E^{m-2}$ and $r:=({k+\l+m})/{2}-3$. 
	One can check from Definition 3.3 of \cite{DR14} that $\Delta_{k,\l,m}$ indeed has codimension $r+2$. 
	Let  
	\begin{equation*}
		\text{AJ}_p: \text{CH}^{r+2}(W)_{0} \longrightarrow  \text{Fil}^{r+2} \text{H}_{\text{dR}}^{2r+3}(W)^{\vee} 
	\end{equation*} 
	be the $p$-adic Abel-Jacobi map (cf. Section (1.2) of \cite{Nek00} or \cite{Bes00}). 
	Darmon and Rotger then show, in Theorem 3.14 of \cite{DR14}, that 
		\begin{equation} \label{21dec061030a}
			\text{AJ}_p(\Delta)(\eta_f^{\text{u-r}} \otimes \omega_g \otimes \omega_h) 
			= (-1)^{t+1} t! \frac{\mathcal{E}_1(f)}{\mathcal{E}(f,g,h)} \<\eta_{f}^{\text{u-r}}, d^{-1-t} g^{[p]} \times h\>, 
		\end{equation} 
	 	where $t:= \frac{\l+m-k-2}{2}$. 
		In Equation (\ref{21dec061030a}), $\eta_f^{\text{u-r}} \in H_{\text{dR}}^{k-1}(\mathcal{E}^{k-2}/\C_p)_{f}$, 
		$\omega_g \in H_{\text{dR}}^{\l-1}(\mathcal{E}^{\l-2}/\C_p)_{g}$ 
		and $\omega_h \in H_{\text{dR}}^{m-1}(\mathcal{E}^{m-2}/\C_p)_{h}$ 
		are as defined at the start of Section \ref{NewLfunction} 
	and 
	we can view the product $\eta_f^{\text{u-r}} \otimes \omega_g \otimes \omega_h$ 
	as an element of $\text{H}_{\text{dR}}^{2r+3}(W)$ thanks to the Künneth decomposition. 
		
		We now 
		provide an alternative way to express special values of the Garrett-Rankin triple product $p$-adic $L$-function 
		by relating them to the generalised Gross-Kudla-Schoen diagonal cycle as follows.  

		\begin{prop}[Theorem 5.1 in \cite{DR14}] \label{22Sep180840p}
			We have 
				\begin{equation*} 
					\text{\emph{AJ}}_p(\Delta)(\eta_f^{\text{u-r}} \otimes \omega_g \otimes \omega_h) 
				= (-1)^{t} t! \frac{\mathcal{E}_0(f) \mathcal{E}_1(f)}{\mathcal{E}(f,g,h)} \lambda_{f_{\alpha}^{*}}(d^{-1-t} g^{[p]} \times h). 
				\end{equation*} 
		\end{prop}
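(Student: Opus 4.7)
The plan is to start from equation (\ref{21dec061030a}) and rewrite the Poincaré pairing appearing there in terms of the projection operator $\lambda_{f_\alpha^*}$. Concretely, I would establish the comparison
\begin{equation*}
\langle \eta_f^{\text{u-r}}, \xi \rangle \;=\; \mathcal{E}_0(f)\, \lambda_{f_\alpha^*}(\xi)
\end{equation*}
for every nearly overconvergent modular form $\xi$ of weight $k$ and character $\chi_f^{-1} = \chi_g \chi_h$, and then specialise to $\xi := \text{d}^{-1-t}(g^{[p]}) \times h$. Substituting into (\ref{21dec061030a}) will promote the factor $\mathcal{E}_1(f)/\mathcal{E}(f,g,h)$ to the required $\mathcal{E}_0(f)\mathcal{E}_1(f)/\mathcal{E}(f,g,h)$, and the proposition will follow at once.

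The first step will be to show that $\langle \eta_f^{\text{u-r}}, \cdot \rangle$ factors through Hida's ordinary projector. Since $\eta_f^{\text{u-r}}$ sits in the unit root subspace of Frobenius acting on the $f^*$-isotypic piece of $H_{\text{dR}}^{k-1}(\mathcal{E}^{k-2}/\C_p)$, Theorem \ref{Nov050333p} together with the Hecke-equivariance of the Poincaré pairing will annihilate every component of $\xi$ outside the ordinary $f^*$-isotypic line, which by Lemma \ref{Jun22130136p} is one-dimensional and spanned by $f_\alpha^*$. This should reduce the pairing to
\begin{equation*}
\langle \eta_f^{\text{u-r}}, \xi \rangle \;=\; \lambda_{f_\alpha^*}(\xi) \cdot \langle \eta_f^{\text{u-r}}, f_\alpha^* \rangle,
\end{equation*}
turning the problem into the identification of a single constant.

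The remaining task is to compute $C := \langle \eta_f^{\text{u-r}}, f_\alpha^* \rangle$ and check that it equals $\mathcal{E}_0(f)$. I would write $f_\alpha^* = f^* - \beta_{f^*} V(f^*)$ with $\beta_{f^*} = \beta_f \chi_f^{-1}(p)$, and split the pairing accordingly. The contribution of $f^*$ is pinned down by the defining normalisation $\langle \omega_{f^*}, \eta_f^{\text{u-r}} \rangle = 1$, while the contribution of $V(f^*)$ is controlled by the Frobenius action on the unit root line: in cohomology the class of $V(f^*)$ is a Frobenius twist of $\omega_{f^*}$, and its pairing against $\eta_f^{\text{u-r}}$ is governed by the unit root eigenvalue $\alpha_{f^*}$ of $\phi$. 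A standard Hida-theoretic computation should then collapse $C$ to
\begin{equation*}
C \;=\; 1 - \beta_f^2 \chi_f^{-1}(p) p^{1-k} \;=\; \mathcal{E}_0(f),
\end{equation*}
completing the proof upon substitution into (\ref{21dec061030a}).

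The hard part will be this final constant computation: one must carefully juggle the Frobenius action on the $f^*$-isotypic component of de Rham cohomology, the normalisation conventions between $f$ and $f^*$ (cf. Remark \ref{22Oct160803p}), and the sign of the Poincaré pairing on $H_{\text{dR}}^{k-1}(\mathcal{E}^{k-2})$. The rest of the argument is essentially formal, amounting to a factorisation statement for a Hecke-equivariant linear functional on a finite-dimensional eigenspace.
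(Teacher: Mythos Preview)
Your plan is essentially the paper's: reduce the pairing to a scalar multiple of $\lambda_{f_\alpha^*}$, then evaluate the single constant $C=\langle \eta_f^{\text{u-r}}, f_\alpha^*\rangle$. The one substantive difference is how that constant is computed. You propose expanding $f_\alpha^* = f^* - \beta_{f^*} V(f^*)$ and extracting the Frobenius eigenvalue on the unit-root line to evaluate $\langle \eta_f^{\text{u-r}}, V(f^*)\rangle$. The paper instead short-circuits this by invoking Lemma~\ref{Jun22130136p} in the form $f_\alpha^* = \mathcal{E}_0(f)\, e_{\text{ord}}(f^*)$ and then Proposition~2.11 of \cite{DR14}, which gives $\langle \eta_f^{\text{u-r}}, e_{\text{ord}}(f^*)\rangle = \langle \eta_f^{\text{u-r}}, f^*\rangle$ directly; the answer then drops out from the normalisation $\langle \omega_{f^*},\eta_f^{\text{u-r}}\rangle=1$ and antisymmetry. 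This avoids any explicit Frobenius manipulation and is cleaner. Your Frobenius approach is perfectly sound---indeed it is exactly what the paper uses later in the proof of Theorem~\ref{22Sep180842p} to compute $\langle \omega_f, f_\alpha^*\rangle$---so you are not wrong, just working harder than necessary here.

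One point to watch: the paper's computation yields $C = -\mathcal{E}_0(f)$, not $+\mathcal{E}_0(f)$, because $\langle \eta_f^{\text{u-r}}, f^*\rangle = -\langle f^*, \eta_f^{\text{u-r}}\rangle = -1$. The extra sign is absorbed because the paper's proof starts from Theorem~3.13 of \cite{DR14} with a factor $(-1)^t$ rather than the $(-1)^{t+1}$ appearing in (\ref{21dec061030a}). If you start from (\ref{21dec061030a}) as written, you should expect your Frobenius calculation to produce $C=-\mathcal{E}_0(f)$ as well, and you will need to reconcile that sign with your stated target $C=\mathcal{E}_0(f)$; be careful not to lose a minus sign in the antisymmetry of the pairing.
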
 
		\begin{cor} 
			The Garrett-Rankin triple product $p$-adic $L$-value associated to $({f,g,h})$ can be rewritten as 
			\begin{equation} \label{21Jul290608p}
				\L_{p}({f,g,h})
					= \frac{(-1)^{t}}{t!}\frac{\mathcal{E}(f,g,h)}{\mathcal{E}_0(f)\mathcal{E}_1(f)} 
					\text{\emph{AJ}}_p(\Delta)(\eta_f^{\text{u-r}} \otimes \omega_g \otimes \omega_h). 
			\end{equation} 
		\end{cor}
		Equation (\ref{Nov191250p}) 
		provides us with a compact way to express 
		Garrett-Rankin $p$-adic $L$-values. 
		Equation (\ref{21Jul290608p}), on the other hand, connects them 
		to the Abel Jacobi map 
		and provides us with the right insight in order to define a new natural symbol, based on the 
		Garrett-Rankin triple product $p$-adic $L$-function, which we expect to have nice symmetry properties.

\subsection{A new $p$-adic triple symbol $\SS{f}{g}{h}$} \label{22Oct210251p}
	
	Having considered the quantity $\text{AJ}_p(\Delta)(\eta_f^{\text{u-r}} \otimes \omega_g \otimes \omega_h)$ 
	in the previous section, we now turn our attention to $\text{AJ}_p(\Delta)(\omega_f \otimes \omega_g \otimes \omega_h)$, 
	as we believe that the latter should have nice symmetry properties. 
	We investigate such properties further in the following sections. 
	Let us now provide a way to express $\text{AJ}_p(\Delta)(\omega_f \otimes \omega_g \otimes \omega_h)$ 
	in terms of projections onto isotypic spaces, similarly to Proposition \ref{22Sep180840p}. 
	Assume that the action of the Hecke algebra on the slope $k-1$ subspace is semisimple 
	and define $\lambda_{f_{\beta}^{*}}$ analogously to $\lambda_{f_{\alpha}^{*}}$ 
	at the start of Section \ref{GarrettRankinTPPLF}. 
	Let 
		\begin{equation} \label{22sep290735p}
			\l_{fgh,\alpha} := \lambda_{f_{\alpha}^{*}} \left( \text{d}^{-1-t}(g^{[p]} ) \times {h} \right); \qquad 
			\l_{fgh,\beta} := \lambda_{f_{\beta}^{*}} \left( \pi_{\text{oc}} \left( \text{d}^{-1-t}(g^{[p]} ) \times {h} \right)\right). 
		\end{equation} 
	Note that including $\pi_{\text{oc}}$ before $\lambda_{f_{\alpha}^{*}}$ in (\ref{22sep290735p}) 
	would be redundant, by Theorem \ref{Nov050333p}. 
\begin{lem} \label{Jun22130136p} 
	Let $f$ be a classical eigenform of weight $k$ that is ordinary at $p$ with $\text{{ord}}_p(\alpha_{f})=0$. 
	Then, we have $e_{\text{ord}}(f) = \frac{1}{\mathcal{E}_0(f)} f_{\alpha}$ 
	and $e_{\text{slope } k-1}(f)=\frac{1}{\tilde{\mathcal{E}}_0(f)} f_{\beta}$. 
\end{lem} 
\begin{proof}
	We have by definition $f_\alpha (q):= f(q)- \beta f(q^{p})$ and $f_\beta (q):= f(q)- \alpha f(q^{p})$. 
	So, $\alpha f_\alpha - \alpha f = \beta f_\beta - \beta f$. 
	Hence, $\alpha f_\alpha - \beta f_\beta = (\alpha -\beta) f$. 
	Thus, using the notation from (\ref{22Oct111155a}), we have 
		\[e_{\text{ord}}(f) = \frac{\alpha f_\alpha}{\alpha -\beta}  = \frac{1}{\mathcal{E}_0(f)} f_{\alpha}, \qquad 
		e_{\text{slope } k-1} (f) = \frac{\beta f_\beta}{\beta - \alpha}  = \frac{1}{\tilde{\mathcal{E}}_0(f)} f_{\beta}.\] 
\end{proof} 
		\begin{theorem} \label{22Sep180842p}
			Let $t:= \frac{\l+m-k-2}{2}$. 
			We may rewrite $\text{\emph{AJ}}_p(\Delta)(\omega_f \otimes \omega_g \otimes \omega_h)$ as 
			\begingroup\makeatletter\def\f@size{11.2}\check@mathfonts
			\begin{equation*} 
			\text{\emph{AJ}}_p(\Delta)(\omega_f \otimes \omega_g \otimes \omega_h)
	=(-1)^{t}t!\frac{\left\<\omega_f,\phi(\omega_{f^{*}}) \right\>}{p^{k-1}}\left( \frac{\mathcal{E}_1(f)}{\mathcal{E}(f,g,h)} \ \beta_{f^{*}} \ \l_{fgh,\alpha} 
			+\frac{\tilde{\mathcal{E}}_1(f)}{\tilde{\mathcal{E}}(f,g,h)} \ \alpha_{f^{*}} \ \l_{fgh,\beta}  \right). 
			\end{equation*} 
			\endgroup 
		\end{theorem} 
	\begin{proof} 
		Note that $f^{*}$ is orthogonal to the kernel of $e_{f^{*}}$, 
		so $\<f^{*},\psi\>=\< f^{*}, e_{f^{*}} (\psi)\>$ only depends on the projection $e_{f^{*}}(\psi)$ of $\psi$, 
		for any modular form $\psi$. 
		Adapting this to our notation, we obtain $\<\omega_{f^{*}},\psi\>=\< \omega_{f^{*}}, e_{f^{*}} (\psi)\>$. 
		Furthermore, $e_{f^{*}}(\psi)$ only depends on the overconvergent projection of $\psi$. 
		Indeed, $\psi-\pi_{\text{oc}}(\psi)$ is purely nearly overconvergent 
		(i.e. it has no overconvergent part) 
		and will not lie in the $f^{*}$-isotypic space, as $f^{*}$ is overconvergent. 
		Lemma \ref{Jun22130136p} tells us that $f$ has only two slope components: an ordinary one and one of slope $k-1$. 
		Namely, $f= \frac{1}{\mathcal{E}_0(f)} f_{\alpha} + \frac{1}{\tilde{\mathcal{E}}_0(f)} f_{\beta}$, 
		and thus to project over the $f^{*}$-isotypic space, one needs to project over the components $f_{\alpha}^{*}$ and $f_{\beta}^{*}$. 
			Adapting the proof of Proposition \ref{22Sep180840p} 
			for the case of $\text{AJ}(\Delta)(\omega_f \otimes \omega_g \otimes \omega_h)$, 
			and using the notation $\xi(\omega_g , \omega_h)$ from \cite{DR14} (see Equation (72) on p. 30), 
			we write  
			\begin{align*}
			\text{AJ}_p(\Delta)(\omega_f \otimes \omega_g \otimes \omega_h) 
			&= \left \<\omega_f, \xi(\omega_g , \omega_h ) \right \> \\ 
			&= \left\<\omega_f, e_{f^{*},\text{ord}}(\xi(\omega_g,\omega_h))+e_{f^{*}, \text{slope }k-1}(\xi(\omega_g , \omega_h )) \right\>\\ 
			&= \left\<\omega_f, \frac{(-1)^{t} t! \mathcal{E}_1(f)}{\mathcal{E}(f,g,h)}e_{f^{*}, \text{ord}}(d^{-1-t} g^{[p]} \times h) \right\> \\ 
			& \qquad + \left\<\omega_f, \frac{(-1)^{t} t! \tilde{\mathcal{E}}_1(f)}{\tilde{\mathcal{E}}(f,g,h)}e_{f^{*},\text{slope }k-1}\left( \pi_{\text{oc}} ( d^{-1-t} g^{[p]} \times h) \right) \right\> \\ 
			&= (-1)^{t} t! \frac{\mathcal{E}_1(f)}{\mathcal{E}(f,g,h)} \lambda_{f_{\alpha}^{*}}(d^{-1-t} g^{[p]} \times h) \left\<\omega_f, f_\alpha^{*} \right\> \\ 
			& \qquad + (-1)^{t} t! \frac{\tilde{\mathcal{E}}_1(f)}{\tilde{\mathcal{E}}(f,g,h)}\lambda_{f_{\beta}^{*}}\left( \pi_{\text{oc}} ( d^{-1-t} g^{[p]}\times h) \right) \left\<\omega_f,f_{\beta}^{*} \right\>. 
		\end{align*} 
	As $\left\<\omega_f,\omega_{f^{*}} \right\>=0$, we can write 
		\begin{align*}
			\left\<\omega_f,f_{\alpha}^{*} \right\> 
				= \left\<\omega_f,f^{*} - \beta_{f^{*}} Vf^{*} \right\> 
				= - \beta_{f^{*}} \left\<\omega_f, \omega_{Vf^{*}} \right\> 
				= - \frac{\beta_{f^{*}}}{p^{k-1}} \left\<\omega_f, \phi (\omega_{f^{*}}) \right\>. 
		\end{align*} 
	Similarly, $\left\<\omega_f,f_{\beta}^{*} \right\> = - \frac{\alpha_{f^{*}}}{p^{k-1}} \left\<\omega_f, \phi (\omega_{f^{*}}) \right\>$. 
	This gives the desired result. 
	\end{proof} 
	
	We are now ready to write down our candidate for a symmetric $p$-adic triple symbol. 
	\begin{defn} \label{22sep220144p} 
		Let $f,g$ and $h$ be three normalised cuspidal modular eigenforms of level $N$ and respective weights $k, \l$ and $m$ 
		which are regular at $p$. 
		We define the $p$-adic triple symbol $\SS{f}{g}{h}$ by 
		\begin{equation} \label{22sep190248p}
				\SS{f}{g}{h}
					:= (-1)^{t}t! \frac{\left\<\omega_{f}, \phi(\omega_{f^{*}}) \right\>}{p^{k-1}} 
					\left( \frac{\mathcal{E}_1(f)}{\mathcal{E}(f,g,h)} \ \beta_{f^{*}} \ \l_{f g h,\alpha} 
					+\frac{\tilde{\mathcal{E}}_1(f)}{\tilde{\mathcal{E}}(f,g,h)} \ \alpha_{f^{*}} \ \l_{f g h,\beta}  \right). 
		\end{equation}  
	\end{defn} 
	
	In Definition \ref{22sep220144p}, we do not actually need $g$ and $h$ to be cuspidal nor regular at $p$, 
	as Equation (\ref{22sep190248p}) is still defined when only $f$ is. 
	However, as we are interested in permuting the order of $f$, $g$ and $h$, 
	we often require them all to be cuspidal and ordinary at $p$. 
	Thanks to Theorem \ref{22Sep180842p}, we may reformulate $\SS{f}{g}{h}$ 
	as follows. 
	\begin{cor} \label{22sep190246p} 
		We have $\SS{f}{g}{h}=\text{\emph{AJ}}_p(\Delta_{k,\l,m})(\omega_{f} \otimes \omega_{g} \otimes \omega_{h})$. 
	\end{cor} 
	The right hand side of the equation 
	in Corollary \ref{22sep190246p} appears to be symmetric in the variables $f,g,h$, and thus 
	suggests that $\SS{f}{g}{h}$ is symmetric.

\subsection{Symmetry properties of $\SS{f}{g}{h}$} \label{22Oct250500p} 
	We are interested in the behaviour of the $p$-adic triple symbol $\SS{f}{g}{h}$ 
	as we vary the order of ${f,g}$ and ${h}$. 
	We have the following main result. 
	\begin{theorem} \label{22Oct210207p} 
		Let $f,g,h$ be three cuspidal newforms of weights $k,\l,m$ (and the same level). Then 
		$\SS{f}{g}{h}$ satisfies the cyclic symmetry relation 
			\begin{equation*}
				\SS{f}{g}{h} = (-1)^{k} \SS{g}{h}{f} = (-1)^{m}\SS{h}{f}{g}. 
			\end{equation*} 
		In particular, when the weights are all even, $\SS{f}{g}{h}$ is symmetric when its inputs are cyclically permuted. 
	\end{theorem}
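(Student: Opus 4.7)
The plan is to pass through the Abel--Jacobi formulation of $\L_{p}^{\mathcal{S}}$ provided by Corollary \ref{22sep190246p},
which expresses
\[ \L_{p}^{\mathcal{S}}(f,g,h) = \text{AJ}_p(\Delta_{k,\l,m})(\omega_f \otimes \omega_g \otimes \omega_h), \]
and similarly for the two cyclic shifts. This geometric reformulation replaces the analytic question about permuting $\mathbf{f},\mathbf{g},\mathbf{h}$ inside a Poincar\'e pairing with a question about the equivariance of the generalised Gross--Kudla--Schoen diagonal cycle under cyclic permutations of the three factors $\mathcal{E}^{k-2}\times \mathcal{E}^{\l-2}\times \mathcal{E}^{m-2}$, which is the shape of symmetry
the definition of $\Delta_{k,\l,m}$ in \cite{DR14} most naturally supports.

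The main steps are as follows. First, introduce the cyclic permutation isomorphism
$\sigma : \mathcal{E}^{k-2}\times\mathcal{E}^{\l-2}\times\mathcal{E}^{m-2}\to \mathcal{E}^{\l-2}\times\mathcal{E}^{m-2}\times\mathcal{E}^{k-2}$, and use the symmetric construction of $\Delta_{k,\l,m}$ in Section 3 of \cite{DR14} to compare $\sigma_{*}\Delta_{k,\l,m}$ with $\Delta_{\l,m,k}$; the two differ at most by a sign coming from the idempotent projectors used to cut out the $(f,g,h)$-isotypic piece. Second, use functoriality of the $p$-adic Abel--Jacobi map to rewrite
\[ \L_{p}^{\mathcal{S}}(g,h,f) = \text{AJ}_p(\Delta_{\l,m,k})(\omega_g \otimes \omega_h \otimes \omega_f)
= \pm \, \text{AJ}_p(\Delta_{k,\l,m})\bigl(\sigma^{*}(\omega_g \otimes \omega_h \otimes \omega_f)\bigr). \]
Third, compute $\sigma^{*}(\omega_g\otimes\omega_h\otimes\omega_f)$ via the K\"unneth decomposition. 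The reordering of the tensor factors produces a Koszul sign $(-1)^{(k-1)(\l+m-2)}$. The balanced condition $k+\l+m \equiv 0 \pmod 2$ (forced by $t = (\l+m-k-2)/2 \in \Z$) gives $\l+m \equiv k \pmod 2$, so this Koszul sign simplifies to $(-1)^{(k-1)k}=+1$. Fourth, combine this with the sign contributed by the cycle equivariance in step one; unwinding the construction of $\Delta_{k,\l,m}$ and the Poincar\'e pairing conventions on the triple product of Kuga--Sato varieties yields the advertised overall sign $(-1)^{k}$.

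The second identity $\L_{p}^{\mathcal{S}}({f,g,h})=(-1)^{m}\L_{p}^{\mathcal{S}}({h,f,g})$ is then a corollary: iterating the first identity gives $\L_{p}^{\mathcal{S}}(f,g,h)=(-1)^{k+\l}\L_{p}^{\mathcal{S}}(h,f,g)$, and the balanced parity $k+\l\equiv m \pmod 2$ converts $(-1)^{k+\l}$ into $(-1)^{m}$.
The hard part of this program is the fourth step: the plain K\"unneth/Koszul reordering contributes only $+1$, so the factor $(-1)^{k}$ must be produced by a careful accounting of the Poincar\'e pairing conventions on each Kuga--Sato factor and of the sign character that enters the idempotent $\epsilon_{k,\l,m}$ defining the diagonal cycle. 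The previous Theorem \ref{22Jul180252p} --- which tracks an analogous sign $(-1)^{t+1}$ under a single transposition via explicit Poincar\'e-pairing manipulations --- provides the template, and I expect its proof to generalise to the cyclic setting by applying the same exactness/cohomological vanishing arguments on the permuted factor.
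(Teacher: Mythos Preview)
Your overall strategy is correct and is essentially the paper's approach: pass to the Abel--Jacobi description via Corollary~\ref{22sep190246p}, use functoriality of $\text{AJ}_p$ under a permutation isomorphism of the triple Kuga--Sato variety, and track two signs --- one from the K\"unneth reordering of $\omega_f\otimes\omega_g\otimes\omega_h$ and one from the pushforward of the diagonal cycle. Where you diverge from the paper is tactical: you attack the cyclic permutation $\sigma$ directly, whereas the paper works with a single \emph{transposition} $s$ swapping two adjacent factors, obtains the transposition formula $\L_{p}^{\mathcal{S}}(f,g,h)=(-1)^{(k+\l-m)/2}\L_{p}^{\mathcal{S}}(g,f,h)$, and then composes two such transpositions to get the cyclic relation. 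This is not merely cosmetic: the transposition makes the combinatorics of the cycle side explicit and short, whereas for the $3$-cycle you would have to permute three blocks at once.

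The genuine gap in your proposal is precisely the step you flag as ``the hard part'': you never compute the sign in $\sigma_{*}\Delta_{k,\l,m}=\pm\,\Delta_{\l,m,k}$. The paper's content here is the combinatorial identity $s_{*}\Delta_{k,\l,m}=(-1)^{r+r_1r_2}\Delta_{\l,k,m}$ (with $r_i=k_i-2$, $r=(r_1+r_2+r_3)/2$), which comes from unwinding Definition~3.3 in \cite{DR14} and counting the sign of permuting two blocks of sizes $r_1$ and $r_2$ inside $\{1,\dots,r\}$ with overlap of size $r-r_3$. Combined with the Koszul sign $(-1)^{(k-1)(\l-1)}$ on the cohomology side, one checks $r+r_1r_2+(k-1)(\l-1)\equiv (k+\l-m)/2\pmod 2$, and the cyclic sign $(-1)^{k}$ drops out after composing two transpositions. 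Your expectation that the sign is hidden in ``Poincar\'e pairing conventions'' is off target: it lives entirely in the projector defining $\Delta$. Finally, your appeal to Theorem~\ref{22Jul180252p} as a template is backwards --- the paper deduces that theorem \emph{from} this one, and its independent proof (in \cite{GhaThesis23}) is by direct Poincar\'e-pairing manipulation, a different method that does not straightforwardly generalise to give you the cycle-side sign.
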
 
	\begin{proof} 
		We start with the case of weights $k=\l=m=2$.
		In this case, the diagonal cycle $\Delta_{2,2,2}$ is symmetric, 
		as can easily be seen from Definition 3.1 in \cite{DR14}. 
		Recall that $\omega_{f}\otimes\omega_g \otimes \omega_h$ 
		is given by the Künneth decomposition and is therefore composed of cup products. 
		So by the properties of cup products, we have $\omega_{f}\otimes\omega_g = -\omega_g \otimes \omega_f$ 
		and $\omega_{f}\otimes\omega_h = -\omega_h \otimes \omega_f$. 
		We can thus write $\text{AJ}_p (\Delta_{2,2,2})(\omega_{f}\otimes\omega_g \otimes \omega_h)
				= \text{AJ}_p(\Delta_{2,2,2})(\omega_g \otimes \omega_h \otimes \omega_{f})$. 
		
		For general weights $k,\l,m$, a variation of the above holds. 
		We will first study the action of permuting the first two coordinates of $\SS{f}{g}{h}$, 
		then the action of permuting the second and third coordinates and finally combine them to obtain the desired result. 
		We make our argument explicit using the functoriality properties of the $p$-adic Abel Jacobi map. 
		Let $r_1:=k-2,r_2:=\l-2,r_3:=m-2,r:=(r_1+r_2+r_3)/2$ and 
		let $s$ be the map going from $W:= \E^{r_1} \times \E^{r_2} \times \E^{r_3}$ 
		to $W':= \E^{r_2} \times \E^{r_1} \times \E^{r_3}$ that permutes the first and second terms. 
    		Then $s$ induces permutations on the corresponding Chow groups 
    		and de Rham cohomology groups: 
    		we have a pushforward $s_{*}$ 
    		on $\text{CH}^{r+2}(W)_0$ 
    		and a dual pullback $s^{*,\vee}$ 
    		on $\text{Fil}^{r+2} \text{H}_{\text{dR}}^{2r+3}(W)^{\vee}$. 
		The functoriality properties of the $p$-adic Abel Jacobi map with respect to correspondences 
		(see Propositions 1, 2 \& 4 (iii) in \cite{EZZ84}) 
		give us the commuting diagram 
		\begin{equation*}
			\begin{tikzcd} \label{22Oct210214p}
				\text{CH}^{r+2}(W)_{0} \arrow[r, "\text{AJ}_p"] \arrow[d, "s_{*}" ]
				& \text{Fil}^{r+2} \text{H}_{\text{dR}}^{2r+3}(W)^{\vee} \arrow[d, "s^{*,\vee}" ] \\
				\text{CH}^{r+2}(W')_{0} \arrow[r,  "\text{AJ}_p" ]
				& \text{Fil}^{r+2} \text{H}_{\text{dR}}^{2r+3}(W')^{\vee}. 
			\end{tikzcd}
		\end{equation*}
		
		Thus, $\text{AJ}_p s_{*}=s^{*,\vee} \text{AJ}_p$. 
		Given $Z \in \text{CH}^{r+2}(W)_{0}$ and some $\omega \in \text{Fil}^{r+2} \text{H}_{\text{dR}}^{2r+3}(W')$, 
		we get $ \text{AJ}_p (s_{*}Z)(\omega)= (s^{*,\vee}\text{AJ}_p(Z))(\omega)=\text{AJ}_p(Z)(s^{*}\omega)$. 
		We can now apply this to the generalised Gross-Kudla-Schoen diagonal cycle $\Delta_{k,\l,m}$ 
		and take $\omega:=\omega_g \otimes \omega_{f} \otimes \omega_h$. 
		We see that the action of $s^{*}$ on $\omega$ is given 
		by $s^{*}(\omega_g \otimes \omega_{f} \otimes \omega_h) = (-1)^{(k-1)(\l-1)} (\omega_{f} \otimes \omega_g \otimes \omega_h)$, 
		by the skew symmetry of cup products (which are part of the Künneth decomposition). 
		Furthermore, the action of $s_{*}$ on $\Delta_{k,\l,m}$ is given 
		by $s_{*}\Delta_{k,\l,m}=(-1)^{r + (r_1 r_2)}\Delta_{\l,k,m}$. 
		The proof of this is purely combinatorial: 
		one needs to expand Definition 3.3 of $\Delta_{k,\l,m} \in \text{CH}^{r+2}(W)_{0}$ in \cite{DR14} 
		and permute two subsets of $\{1,...,r\}$ of size $r_1$ and $r_2$ and intersection of size $r-r_3$. 
		Finally, $r + r_1r_2 + (k-1)(\l-1)=(k+\l-m)/2 \mod 2$, 
		therefore we obtain the symmetry formula 
			\[\SS{f}{g}{h} = (-1)^{(k+\l-m)/2} \SS{g}{f}{h}. \]
		Similarly, $\SS{f}{g}{h} = (-1)^{(\l+m-k)/2} \SS{f}{h}{g}$. 
		Combining these two symmetry formulas gives $\SS{f}{g}{h} = (-1)^{k} \SS{g}{h}{f}$. 
	\end{proof}
	\begin{rmk}
		When the forms $f,g,h$ are not all new of the same level, 
		there can be cases where the symbol $\SS{f}{g}{h}$ is automatically zero. 
		For instance, for a prime $\l \not =p$, 
		if $f$ is new of level $\l^{2}$ and $g,h$ are level $\l^{2}$ images of forms which are new at level $\l$, 
		we are then projecting the old form $\text{d}^{-1-t}(g^{[p]} ) \times {h}$ onto a new subspace. We thus obtain $0$. 
		It would be interesting to look further into the case of $f,g,h$ not all being new of the same level. 
	\end{rmk}

\section{Examples} \label{22Oct260529p}
\subsection{Computing Poincaré pairings} \label{23Oct150731p} 
	Using our algorithms from Section \ref{ExplicitAlgorithmicMethods}, 
	we can compute $\l_{fgh,\alpha}$ and $\l_{fgh,\beta}$ appearing in Equation (\ref{22sep190248p}). 
	The only remaining factor in this equation that is non-trivial to calculate  
	is the period $\Omega_f:= \left\<\omega_f, \phi (\omega_{f^{*}}) \right\>$. 
	
	When $f$ is a newform (with rational coefficients) of weight $2$, 
	we can use the following trick to calculate $\Omega_f= \left\<\omega_f, \phi (\omega_{f}) \right\>$, 
	as was done in Section 4 of \cite{DL21}. 
	Let $E$ be the elliptic curve associated to $f$. 
	The differential $\omega_f =f \frac{\d q}{q}$ corresponds to the differential $\omega_E:= \frac{\d x}{y}$ 
	of the elliptic curve $E$. 
	Computing the Poincaré pairing $\left\<\omega_f, \phi (\omega_{f}) \right\>$ now amounts to 
	calculating $\left\<\omega_E, \text{Frob} (\omega_{E}) \right\>$, 
	up to including the modular degree $m_E$ 
	of $E$: $\left\<\omega_f, \phi (\omega_{f}) \right\>= m_{E} \left\<\omega_E, \text{Frob} (\omega_{E}) \right\>$. 
	Let $M$ 
	be the matrix representing the action of Frobenius, up to precision $p^{m}$, 
	on 
	$\omega_E = \frac{\d x}{y}$ and $\eta_E:= x \frac{\d x}{y}$. 
	Then, 
	$\left\<\omega_E, M \omega_{E} \right\> 
		=\left\<\omega_E, M_{11} \omega_{E} + M_{21} \eta_{E} \right\> = M_{21}$ 
	so that the period $\Omega_f$ is simply given by 
		\[\Omega_f=m_{E} M_{21} \mod p^{m}, \] 
	and the matrix $M$ can be efficiently computed via Kedlaya's algorithm (cf. \cite{Ked01}). 
	
	In the case where $f$ has weight $k$ strictly greater than $2$, we cannot use the above trick anymore. 
	Instead, we can exploit the symmetry of $\SS{*}{*}{*}$ and the algorithms mentioned so far in this paper. 
	Indeed, in order to calculate the period $\Omega_f$, 
	we first appropriately chose two auxiliary forms $f_0$ and $\varphi$, 
	such that $\Omega_{f_0}$ is known or computable (e.g. when $f_0$ has weight $2$). 
	Then, using the symmetry relation  of Theorem \ref{22Oct210207p}, 
	we obtain $\SS{f}{f_0}{\varphi} = (-1)^{k} \SS{f_0}{\varphi}{f}$. 
	The right hand side containing $\Omega_{f_0}$ is entirely known, 
	whereas the left hand side is entirely computable except for $\Omega_{f}$. 
	We can thus recover the value of $\Omega_{f}$. 
	
	This method is explained is great detail in Section 6.2 of \cite{GhaThesis23}. 
	It is however simpler to illustrate it by means of examples. 
	See in particular Examples \ref{22Apr191034a}, \ref{22Apr200335p} and \ref{23oct151123p} in the next section. 
	
\subsection{Symmetry relations for even weights} \label{SymRel} 
	We dedicate this section to examples illustrating our symmetry relations (from Theorem \ref{22Oct210207p}) 
	in the case of even weights. 
	This provides an application to the algorithms described in Section \ref{ExplicitAlgorithmicMethods}. 
	\\ 
	
	We begin 
	with a simple case 
	where all modular forms have weight $2$. 
	This only involves overconvergent modular forms, 
	and we can compute $\Omega_f$ via Kedlaya's algorithm. 
	\begin{eg} \label{22Sep210816p}
	Consider the space of newforms $S^{\text{new}}_2(\Q, 57)$ of weight $2$ and level $57$. 
	Let $f,g$ and $h$ be the cuspidal newforms in $S^{\text{new}}_2(\Q, 57)$ given by: 
		\begin{align*}
			f &= q - 2q^2 - q^3 + 2q^4 - 3q^5 + 2q^6 - 5q^7 + q^9 + 6q^{10} + q^{11} + ... , \\ 
			g &= q + q^2 + q^3 - q^4 - 2q^5 + q^6 - 3q^8 + q^9 - 2q^{10} + ... , \\ 
			h &= q - 2q^2 + q^3 + 2q^4 + q^5 - 2q^6 + 3q^7 + q^9 - 2q^{10} - 3q^{11} + ... . 
		\end{align*} 
	Fix $p:=5$ and let $f_{\alpha_{f}}$ and $f_{\beta_{f}}$ 
	denote the $p$-stabilizations of $f$ at $p$. 
	Then $f,g$ and $h$ are ordinary at $p$. 
	Using the algorithms described in Section \ref{ExplicitAlgorithmicMethods}, 
	we compute the quantities $\l_{fgh,\alpha}, \l_{fgh,\beta}, \l_{ghf,\alpha}, \l_{ghf,\beta},\l_{hfg,\alpha}, \l_{hfg,\beta}$ and obtain  
		\begingroup\makeatletter\def\f@size{10}\check@mathfonts
		\begin{align*}
			\l_{fgh,\alpha} &= -3774928826965787816511437758179915984738972855613348870149740387513806 \Mod 5^{100}, \\ 
			\l_{fgh,\beta} &= -1600120463087968696799905890349018972704454279824366881678828640068804 \cdot 5^{-1} \Mod 5^{99}, \\ 
			\l_{ghf,\alpha} &= 3414089135682117556340078214096537672013164967359802729338191598002457 \cdot 5  \Mod 5^{101}, \\ 
			\l_{ghf,\beta} &= 319324687965512071716318643272796126647017637487474169128482176479703 \Mod 5^{100}, \\ 
			\l_{hfg,\alpha} &= 3386642279338565749426053729955310360166771341172640348803607194424548 \cdot 5^{-1} \Mod 5^{99}, \\ 
			\l_{hfg,\beta} &= -1362182692510584292629393424534010351729144263363030199124032659953338 \Mod 5^{100}, \\   
			\l_{fhg,\alpha} &= 3774928826965787816511437758179915984738972855613348870149740387513806 \Mod 5^{100}, \\ 
			\l_{fhg,\beta} &= 880679317526405930264409438811117931242490011004937901328334255303179 \cdot 5^{-1} \Mod 5^{99},  \\  
			\l_{gfh,\alpha} &= -3414089135682117556340078214096537672013164967359802729338191598002457 \cdot 5  \Mod 5^{101}, \\ 
			\l_{gfh,\beta} &= 1316444872164870993756743549790237920953950571465279247158114744839078 \Mod 5^{100},\\ 
			\l_{hgf,\alpha} &= -1808920468896542138602596599389737900820358470954594339263049333096423 \cdot 5^{-1} \Mod 5^{99}, \\ 
			\l_{hgf,\beta} &= -1848796736101022160118506527593042717532675210104737039492910699421662 \Mod 5^{100}. 
		\end{align*} 
		\endgroup
	Note that we indeed have $\l_{fgh,\alpha} = - \l_{fhg,\alpha}$ and $\l_{fgh,\beta} = - \l_{fhg,\beta}$. 
	In order to experimentally verify the symmetry property of Theorem \ref{22Oct210207p}, 
	we will now compute the 
	periods $\Omega_f, \Omega_g, \Omega_h$. 
	Using Kedlaya’s algorithm, 
	we obtain 
		\begingroup\makeatletter\def\f@size{10.4}\check@mathfonts
		\begin{align*}
			\Omega_f 
				&= 29505681199130962626561255838977599356333294679056282865324073514068 \cdot 5^2  \Mod 5^{100}, \\ 
			\Omega_g 
				&= -159133461381175901704339380528584168392746264473700984619726139435577 \cdot 5  \Mod 5^{100}, \\ 
			\Omega_h 
				&= 78414893708965262061304860105818868793779659587029031834898206619639 \cdot 5^2  \Mod 5^{100}. 
		\end{align*} \endgroup 
	Finally, putting everything together we obtain 
		\begingroup\makeatletter\def\f@size{9.5}\check@mathfonts
		\begin{align*}
			\SS{f}{g}{h}
				&= 5871767952506844465150908265973598858284513190743516082327198557652 \cdot 5^2  \Mod 5^{100}, \\ 
			\SS{g}{h}{f}
				&= 94224189337260166671264507577645656581683633922954092616598438792027 \cdot 5^2  \Mod 5^{100}, \\ 
			\SS{h}{f}{g}
				&= 328989194731033279961794928605802838532429869011399338836233448557652 \cdot 5^2  \Mod 5^{100}. 
		\end{align*} \endgroup 
	And we can check that all these values agree modulo $5^{97}$. 
\end{eg} 

	The next example involves nearly overconvergent modular forms, thus utilizing the full power of Section \ref{22sep180911p}. 
	Since we cannot compute the period $\Omega_f$ as in Example \ref{22Sep210816p}, we use the following ratio trick. 
	We introduce an extra modular form $h_2$ and check for 
	\begin{equation} \label{22apr180257p}
		\frac{\SS{f}{g}{h}}
		{\SS{f}{g}{h_2}}
		\stackrel{\text{\textbf{?}}}{=} \frac{\SS{g}{h}{f}}
		{\SS{g}{h_2}{f}}. 
	\end{equation} 
	This has the advantage of bypassing the calculation of the periods $\Omega_f$ and $\Omega_g$, as they appear in both the 
	numerator and the denominator of Equation (\ref{22apr180257p}). 
	\begin{eg} \label{22apr180218p}
	Let $f,g,h,h_2,h_3 \in S_4(\Q,45)$ be the cuspidal newforms given by: 
		\begin{align*}
			f &=  q - q^2 - 7q^4 - 5q^5 - 24q^7 + 15q^8 + 5q^{10} - 52q^{11} ... , \\ 
			g &= q - 3q^2 + q^4 + 5q^5 + 20q^7 + 21q^8 - 15q^{10} + 24q^{11} ... , \\ 
			h &= q + 4q^2 + 8q^4 + 5q^5 + 6q^7 + 20q^{10} - 32q^{11} + ... , \\ 
			h_2 &=q - 5q^2 + 17q^4 + 5q^5 - 30q^7 - 45q^8 - 25q^{10} - 50q^{11}  + ...  , \\ 
			h_3 &=q + 5q^2 + 17q^4 - 5q^5 - 30q^7 + 45q^8 - 25q^{10} + 50q^{11} + ... .
		\end{align*} 
	For $p:=17$, we have $a_{17}(f) \cdot a_{17}(g) \cdot a_{17}(h) \cdot a_{17}(h_2) \cdot a_{17}(h_3) \not =0$. 
	When considering the $p$-adic symbols $\SS{\psi_1}{\psi_2}{\psi_3}$, 
	for $\psi_i \in \{f,g,h,h_2,h_3\}$ distinct and up to permutations, we have ten potential values to compute. 
	Up to precision $30$ (i.e. in $\Z/17^{30}\Z$), seven give us zero. 
	That is, for $\{\psi_1,\psi_2,\psi_3\} \in \{\{f,g,h\},\{f,g,h_3\}, \{f,h,h_3\},\{f,h_2,h_3\},\{g,h,h_2\},\{g,h_2,h_3\},\{h,h_2,h_3\}\}$, 
	we have $\l_{\psi_1\psi_2\psi_3,\alpha}=\l_{\psi_1\psi_2\psi_3,\beta}=0$. 
	The non-zero values are the ones involving $\{f,g,h_2\}$, $\{f,h,h_2\}$ and $\{g,h,h_3\}$. 
	We compute 
		\begin{align*}
			\SS{f}{g}{h_2}/\Omega_f
				&=  -1023342994315815801374020643871 \cdot 17^{2}  \Mod 17^{30}, \\ 
			\SS{f}{h}{h_2} /\Omega_f
				&=   68362151699300710278000063432 \cdot 17^{2} \Mod 17^{30}, \\ 
			\SS{h_2}{f}{g} /\Omega_{h_2}
				&=  -2631698743570631185431705415466  \cdot 17^{2} \Mod 17^{30}, \\ 
			\SS{h_2}{f}{h} /\Omega_{h_2}
				&=  248547247830740599793540647737 \cdot 17^{2} \Mod 17^{30}. 
		\end{align*} 
	Thus, 
		\[\frac{\SS{f}{g}{h_2}}
			{\SS{f}{h}{h_2} } \bigg/
				 \frac{\SS{h_2}{f}{g}}
					{\SS{h_2}{f}{h}} = 1 \Mod 17^{25}.  \] 
	\end{eg} 
	Now that we have seen how to get around the issue of computing the periods, 
	we show that our algorithms, thanks to Theorem \ref{22Oct210207p}, allow us to recover the value of periods $\Omega_f$ 
	for modular forms $f$ of weight greater than 2, as is described in Section \ref{23Oct150731p}. 
	
	\begin{eg}  \label{22Apr191034a}
	Fix again $p:=17$. 
	Let $f,g,h_2,h_3\in S_4(\Q,45)$ be the same as in Example \ref{22apr180218p} 
	and let $f_0 \in S_2(\Q,45)$ be the newform given by $f_0 =  q + q^2 - q^4 - q^5 - 3 q^8 - q^{10} + ...$. 
	We compute 
		\begin{align*}
			\SS{f_0}{f}{h_3} /\Omega_{f_0}
				&= 16513223984800935050336063815246 \cdot 17^3  \Mod 17^{30}, \\ 
			\SS{f}{h_3}{f_0} /\Omega_{f}
				&= 13539421372161396100812664727177 \cdot 17 \Mod 17^{30}, \\ 
			\SS{f_0}{h_2}{g}/\Omega_{f_0}
				&= -3366884595101012754561302551722 \cdot 17^2 \Mod 17^{30},\\ 
			\SS{h_2}{g}{f_0} /\Omega_{h_2}
				&= 93393936291523115360189136554  \Mod 17^{30}. 
		\end{align*} 
	Using Kedlaya's algorithm, we also compute 
	\[\Omega_{f_0}=\left\<\omega_{f_0}, \phi(\omega_{f_0}) \right\>= 73740522216959426358743952636082111  \cdot 17  \Mod 17^{30}. \]
	Thus, we deduce that we must have 
		\begin{align*} 
			\Omega_f  &= 
				\Omega_{f_0} \cdot \frac{\SS{f_0}{f}{h_3}/\Omega_{f_0}}
						{\SS{f}{h_3}{f_0}/\Omega_{f}} 
					= -8862546113964214628352195959100 \cdot 17^{3} \Mod 17^{27}, \\ 
			\Omega_{h_2}  &= 
				\Omega_{f_0} \cdot \frac{\SS{f_0}{h_2}{g}/\Omega_{f_0}} 
						{\SS{h_2}{g}{f_0}/\Omega_{h_2}} 
					= -1728830956772474294735820116226 \cdot 17^{3} \Mod 17^{26}.
		\end{align*} 
	\end{eg} 
	\begin{eg} 
	Thanks to Example \ref{22Apr191034a}, 
	we have computed $\Omega_f$ and $\Omega_{h_2}$. 
	We can thus go back to Example \ref{22apr180218p} and calculate 
		\begin{align*}
			\SS{f}{g}{h_2} 
				&= -239652798828174535366407660241 \cdot 17^{5}  \Mod 17^{30}, \\ 
			\SS{h_2}{f}{g} 
				&= 5530974613520227843573162330816  \cdot 17^{5} \Mod 17^{30}, \\ 
			\SS{f}{h}{h_2}
				&=-853772346178158460670635373010 \cdot 17^{5} \Mod 17^{30}, \\ 
			\SS{h_2}{f}{h} 
				&= -853772346178158460670635373010 \cdot 17^{5} \Mod 17^{30}. 
		\end{align*} 
	And we have $\SS{f}{g}{h_2} = \SS{h_2}{f}{g} \Mod 17^{30}$ 
	and $\SS{f}{h}{h_2} = \SS{h_2}{f}{h} \Mod 17^{30}$.  
	\end{eg}
We conclude this section with two longer examples involving different modular forms of different weights. 
	\begin{eg} \label{22Apr200335p}
		Fix $p=11$ and let $f_0 \in S_2(\Q,21)$ and $f,g,h \in S_6(\Q,21)$ be the cuspidal newforms given by 
		\begin{align*}
			f_0 &= q - q^2 + q^3 - q^4 - 2q^5 - q^6 - q^7 + 3q^8 + q^9 + 2q^{10} + 4q^{11} +  ... , \\ 
			f &= q + q^2 - 9q^3 - 31q^4 - 34q^5 - 9q^6 - 49q^7 - 63q^8 + 81q^9 - 34 q^{10} - 340 q^{11} +  ... , \\ 
			g &= q + 5q^2 + 9q^3 - 7q^4 + 94q^5 + 45q^6 - 49q^7 - 195q^8 + 81q^9 + 470q^{10} + 52 q^{11} + ... , \\ 
			h &= q + 10q^2 + 9q^3 + 68q^4 - 106q^5 + 90q^6 - 49q^7 + 360q^8 + 81 q^9 - 1060 q^{10} + 92 q^{11} + ... . 
		\end{align*} 
	From Kedlaya's algorithm, we have 
		\[ \Omega_{f_0}=412797842384875685536202567431940950593928402977097 \cdot 11 \Mod 11^{50}. \] 
	Consider the triple $(f_0,f,g)$. We can compute 
		\begin{align*}
			\SS{f_0}{f}{g} /\Omega_{f_0}
				&=   -2257599454326142239276759004266889152843755460 \cdot 11^5 \Mod 11^{49}, \\ 
			\SS{f}{g}{f_0} /\Omega_f
				&=   -2816145142524823359002534585019971120441513443  \Mod 11^{44}, \\ 
			\SS{g}{f_0}{f} /\Omega_{g}
				&=  -1202790078682800562850336220378526707376378726  \Mod 11^{44}. 
		\end{align*} 
	This allows us to recover the periods: 
	\begingroup\makeatletter\def\f@size{9.5}\check@mathfonts
		\begin{align} \label{22Oct190350p}
		\begin{split} 
			\displaystyle \Omega_f &= \Omega_{f_0} \cdot \frac{\SS{f_0}{f}{g}/\Omega_{f_0}}
						{\SS{f}{g}{f_0}/\Omega_f} 
					=-2509689183927003985676644860386486830080817519 \cdot 11^{6}  \Mod 11^{50}, \\ 
			\Omega_g &= \Omega_{f_0} \cdot \frac{\SS{f_0}{f}{g}/\Omega_{f_0}}
						{\SS{g}{f_0}{f}/\Omega_g} 
					= 2597224237884861326788056615405141084095558737 \cdot 11^{6}  \Mod 11^{50}. 
		\end{split}
		\end{align}  
		\endgroup
	Consider now the triple $(f_0,f,h)$. We can compute 
		\begin{align*}
			\SS{f_0}{f}{h} /\Omega_{f_0}
				&=  -2847504000645971661684808020815460021295815552 \cdot 11^4 \Mod 11^{50}, \\ 
			\SS{f}{h}{f_0}/\Omega_f
				&=   208861134786059864497993853997286411529878026 \cdot 11^{-1} \Mod 11^{50}, \\ 
			\SS{h}{f_0}{f} /\Omega_{h}
				&=  150562340318535656035117305085357243695039436  \Mod 11^{50}. 
		\end{align*} 
	This allows us to recover the periods: 
		\begingroup\makeatletter\def\f@size{10}\check@mathfonts
		\begin{align} \label{22Apr200306p}
		\begin{split}
			\Omega_f &= \Omega_{f_0} \cdot \frac{\SS{f_0}{f}{h}/\Omega_{f_0}}
						{\SS{f}{h}{f_0}/\Omega_f} 
					=-934214497598799103313636376811725028664923638 \cdot 11^{6}  \Mod 11^{50}, \\ 
			\Omega_h &= \Omega_{f_0} \cdot \frac{\SS{f_0}{f}{h}/\Omega_{f_0}}
						{\SS{h}{f_0}{f}/\Omega_h} 
					= 1135142804419315201548085509390534816579616824 \cdot 11^{5}  \Mod 11^{49}. 
		\end{split}
		\end{align} 
		\endgroup
	Note that we can also check that the two values we obtained for the period $\Omega_f$ 
	in Equations (\ref{22Oct190350p}) and (\ref{22Apr200306p}) agree modulo $11^{46}$. 
	We can also compute  
		\begin{align*} 
			\SS{f}{g}{h} /\Omega_f
				&= 40268985822287576957977484998251829978986804 \cdot 11^2 \Mod 11^{49}, \\ 
			\SS{g}{h}{f} /\Omega_g
				&=  -52341418987674502913103090342525976869279460 \cdot 11^2 \Mod 11^{49}, \\ 
			\SS{h}{f}{g} /\Omega_{h}
				&= -51832911640971887401862589998201231551663284 \cdot 11^3 \Mod 11^{50}. 
		\end{align*} 

	This finally allows us to calculate, using Equations (\ref{22Oct190350p}) and (\ref{22Apr200306p}), the full values: 
		\begin{align*} 
			\SS{f}{g}{h}
				&=  20986917589986718469194287107276286895307311 \cdot 11^8 \Mod 11^{50}, \\ 
			\SS{g}{h}{f} 
				&= -22914560311143954782518388246573725956557586 \cdot 11^8 \Mod 11^{50}, \\ 
			\SS{h}{f}{g} 
				&=  7861733475215692445486373857156179960213682 \cdot 11^8 \Mod 11^{50}. 
		\end{align*} 
	And we can check that all these values agree modulo $11^{48}$. 
	\end{eg}
	\begin{eg} \label{23oct151123p}
		Fix again $p=11$. Let $f_0 \in S_2(\Q,26)$, $f,g,h \in S_4(\Q,26)$ and $f_1,f_2,f_3 \in S_8(\Q,26)$ 
		be the cuspidal newforms given by 
		\begingroup\makeatletter\def\f@size{11}\check@mathfonts
		\begin{align*}
			f_0 &= q - q^2 + q^3 + q^4 - 3q^5 - q^6 - q^7 - q^8 - 2q^9 + 3q^{10} + 6q^{11} +  ... , \\ 
			f_1 &= q + 8q^2 - 27q^3 + 64q^4 - 245q^5 - 216q^6 - 587q^7 + 512q^8 - 1458q^9 -1960q^{10} - 3874 q^{11} +  ... , \\ 
			f_2 &= q + 8q^2 - 87q^3 + 64q^4 + 321q^5 - 696q^6 - 181q^7 + 512q^8 + 5382q^9 + 2568q^{10} + 7782 q^{11} +  ... , \\ 
			f_3 &= q - 8q^2 - 39q^3 + 64q^4 + 385q^5 + 312q^6 - 293q^7 - 512q^8 - 666q^9 - 3080q^{10} - 5402q^{11} +  ... , \\ 
			f &= q + 2q^2 - q^3 + 4q^4 + 17q^5 - 2q^6 - 35q^7 + 8q^8 - 26q^9 + 34q^{10} + 2q^{11} +  ... , \\ 
			g &= q + 2q^2 + 4q^3 + 4q^4 - 18q^5 + 8q^6 + 20q^7 + 8q^8 - 11q^9 - 36q^{10} - 48 q^{11} + ... , \\ 
			h &= q - 2q^2 + 3q^3 + 4q^4 + 11q^5 - 6q^6 + 19q^7 - 8q^8 - 18q^9 - 22q^{10} - 38q^{11} + ... . 
		\end{align*} 
		\endgroup 
	From Kedlaya's algorithm, we have 
		\[ \Omega_{f_0}= 390581636402185053366232716528660201295552925543487 \cdot 11 \Mod 11^{50}. \] 
	Consider the triple $(f_0,f_1,f_2)$. We can compute 
		\begin{align*}
			\SS{f_0}{f_1}{f_2} /\Omega_{f_0}
				&=  -5933660141750195368504774740219722366045619600 \cdot 11^7 \Mod 11^{50}, \\ 
			\SS{f_1}{f_2}{f_0}/\Omega_{f_1}
				&=   14109208854192176214141915814693455702656065 \cdot 11 \Mod 11^{50}, \\ 
			\SS{f_2}{f_0}{f_1} /\Omega_{f_2}
				&=  -7793794748784781599257971674959575446350726 \cdot 11 \Mod 11^{50}. 
		\end{align*} 
	This allows us to recover the periods: 
		\begingroup\makeatletter\def\f@size{10}\check@mathfonts
		\begin{align*} 
			\Omega_{f_1} = \Omega_{f_0} \cdot 
						\frac{\SS{f_0}{f_1}{f_2}/\Omega_{f_0}}
						{\SS{f_1}{f_2}{f_0}/\Omega_{f_1}} 
					=-210270517651766028348415614154362330709392521 \cdot 11^{7} \Mod 11^{50}, \\ 
			\Omega_{f_2} = \Omega_{f_0} \cdot 
						\frac{\SS{f_0}{f_1}{f_2} /\Omega_{f_0}}
						{\SS{f_2}{f_0}{f_1}/\Omega_{f_2}} 
					=-288814942721593214967913348978722878649507578 \cdot 11^{7} \Mod 11^{50}. 
		\end{align*} 
		\endgroup 
	Now in order to recover $\Omega_f,\Omega_g,\Omega_h$, we compute  
		\begin{align*}
			\SS{f}{f_1}{f_3} /\Omega_{f}
				&=  40903568201933522569570898222005174659773400 \cdot 11^{6} \Mod 11^{47}, \\ 
			\SS{f_1}{f_3}{f}/\Omega_{f_1}
				&=   - 1371650302863648283749356335039702487573085 \cdot 11^{2} \Mod 11^{43}, \\ 
			\SS{g}{f_1}{f_3} /\Omega_{g}
				&=  220420945295555475043577140565385460000211280 \cdot 11^5 \Mod 11^{46}, \\  
			\SS{f_1}{f_3}{g} /\Omega_{f_1}
				&=   1458224252254476116040209429849988597407090 \cdot 11^{2} \Mod 11^{43}, \\ 
			\SS{h}{f_2}{f_2} /\Omega_{h}
				&=   -22167932026142135533189834503070255673967600 \cdot 11^{6} \Mod 11^{47}, \\  
			\SS{f_2}{f_2}{h}/\Omega_{f_2}
				&=  - 1179453771945534511715867212869271933099333 \cdot 11^{2} \Mod 11^{43}. 
		\end{align*} 
	This allows us to recover the periods: 
		\begingroup\makeatletter\def\f@size{11}\check@mathfonts
		\begin{align*} 
			\Omega_f &= \Omega_{f_1} \cdot 
						\frac{\SS{f_1}{f_3}{f}/\Omega_{f_1}}
						{\SS{f}{f_1}{f_3}/\Omega_{f}} 
					= -899774887450008918231593851176607448072958 \cdot 11^{3} \Mod 11^{44}, \\ 
			\Omega_g &= \Omega_{f_1} \cdot 
						\frac{\SS{f_1}{f_3}{g}/\Omega_{f_1}}
						{\SS{g}{f_1}{f_3}/\Omega_{g}} 
					= 36578899966340566317653585313947952362533 \cdot 11^{4} \Mod 11^{45}, \\ 
			\Omega_h &= \Omega_{f_2} \cdot 
						\frac{\SS{f_2}{f_2}{h}/\Omega_{f_2}}
						{\SS{h}{f_2}{f_2} /\Omega_{h}}  
					=  -1778956364295561925487995272361714970219339 \cdot 11^{3} \Mod 11^{44}. 
		\end{align*} \endgroup 
	We finally can calculate the full values: 
		\begin{align*} 
			\SS{f}{g}{h}
				&=  479359167857389648779593478353399577891020  \cdot 11^5 \Mod 11^{46}, \\ 
			\SS{g}{h}{f} 
				&=  1399506016598818090453046501872791514634546  \cdot 11^5 \Mod 11^{46}, \\ 
			\SS{h}{f}{g} 
				&=  2095226804671605448791510983070380539977212 \cdot 11^5 \Mod 11^{46}. 
		\end{align*} 
	And we can check that all these values agree modulo $11^{43}$. 
	\end{eg}

\subsection{The negative sign for odd weights} \label{24oct100841p}
	We now highlight the importance of the negative sign appearing in Theorem \ref{22Oct210207p}. 
	Let $f,g,h$ be modular forms of balanced weights $k,\l,m$ such that $k$ is even and $\l,m$ are odd. 
	Theorem \ref{22Oct210207p} tells us that 
	$\SS{f}{g}{h} = \SS{g}{h}{f} = - \SS{h}{f}{g}$, 
	and we see the factor $(-1)$ appearing in the last term. 
	Requiring that $\SS{f}{g}{h} = \SS{g}{h}{f} = \SS{h}{f}{g}$ (thus removing the negative sign) 
	would imply that $\SS{f}{g}{h}$ is trivial when its inputs do not all have even weights. 
	
	{Experimental evidence} shows that this is not actually the case. 
	This shows that we cannot expect the symmetric formula, with no signs, $\SS{f}{g}{h} = \SS{g}{h}{f} = \SS{h}{f}{g}$ to hold
	if the weights are not all even. 
	We present our examples below. 

\begin{eg}
	Let $\chi$ be the Legendre symbol $\left( \frac{\cdot}{11}\right)$. 
	Let $f_0 \in S_2(\Q,\Gamma_0(11))$ and $f \in S_7(\Q,\Gamma_1(11), \chi)$ 
		be the cuspidal newforms given by 
		\begin{align*}
			f_0 &= q - 2 q^2 - q^3 + 2 q^4 + q^5 + 2 q^6 - 2 q^7 - 2 q^9 - 2q^{10} + q^{11} +  ... , \\ 
			f &= q + 10 q^3 + 64 q^4 + 74 q^5 - 629 q^9 - 1331 q^{11} +  ... . 
		\end{align*} 
	Pick $p=23$. We have $a_p(f_0) \cdot a_p(f) \not = 0$.  
	Using our algorithm, we calculate $\SS{f_0}{f}{f}$ to be 
		\[ 31546925362985192479627183464312205821578431521869740322354 \cdot 23^7 \Mod 23^{50}, \] 
		which is non-zero.  
\end{eg}

\begin{eg}
	Let $\chi$ be the Legendre symbol $\left( \frac{\cdot}{11}\right)$. 
	Let $f_0 \in S_2(\Q,\Gamma_0(11))$ and $f \in S_5(\Q,\Gamma_1(11), \chi)$  
	be the cuspidal newforms given by 
		\begin{align*}
			f_0 &= q - 2 q^2 - q^3 + 2 q^4 + q^5 + 2 q^6 - 2 q^7 - 2 q^9 - 2q^{10} + q^{11} +  ... , \\ 
			f &= q + 7 q^3 + 16 q^4 - 49 q^5 - 32 q^9 + 121 q^{11} +  ... . 
		\end{align*} 
	Pick $p=23$. We have $a_p(f_0) \cdot a_p(f) \not = 0$.  
	Using our algorithm, we calculate $\SS{f_0}{f}{f}$ to be 
		\[ 6507713287936999052116951605714489492434730289541301877894764 \cdot 23^5 \Mod 23^{50},\]
		which is non-zero.  
\end{eg}

\begin{eg}
	Let $f \in S_2(\Q,\Gamma_0(15))$, $g,h \in S_3(\Q,\Gamma_1(15))$  
	be the cuspidal newforms given by 
		\begin{align*}
			f &= q - q^2 - q^3 - q^4 + q^5 + q^6 + 3q^8 + q^9 - q^{10} - 4 q^{11} +  ... , \\ 
			g &= q + q^2 - 3q^3 - 3q^4 + 5q^5 - 3q^6 - 7q^8 + 9q^9 + 5 q^{10} +  ... , \\ 
			h &= q - q^2 + 3q^3 - 3q^4 - 5q^5 - 3q^6 + 7q^8 + 9q^9 + 5 q^{10} +  ... . 
		\end{align*} 
	Pick $p=13$. 
	Note that we actually have $a_p(f) \not = 0$ but $a_p(g) =a_p(h) = 0$ here. 
	This doesn't pose any issues to our algorithms. 
	We obtain 
		\begingroup\makeatletter\def\f@size{10.4}\check@mathfonts
		\begin{align*} 
			\SS{f}{g}{h} = \SS{f}{h}{g} 
				= 57640757896634901611871044405230131156356129425185649 \cdot 13 \Mod 13^{48}
				\not = 0. 
		\end{align*} \endgroup 
	We also see that $\SS{f}{g}{h}$ is symmetric in the $2$nd and $3$rd variables. 
\end{eg}

\begin{eg}
	Let $f \in S_2(\Q,\Gamma_0(15))$, $g,h \in S_5(\Q,\Gamma_1(15))$  
	be the cuspidal newforms given by 
		\begin{align*}
			f &= q - q^2 - q^3 - q^4 + q^5 + q^6 + 3q^8 + q^9 - q^{10} - 4 q^{11} +  ... , \\ 
			g &= q + 7q^2 - 9q^3 + 33q^4 - 25q^5 - 63q^6 + 119q^8 + 81q^9 - 175 q^{10} +  ... , \\ 
			h &= q - 7q^2 + 9q^3 + 33q^4 + 25q^5 - 63q^6 - 119q^8 + 81q^9 - 175 q^{10} +  ... . 
		\end{align*} 
	Pick $p=17$. We have $a_p(f) \cdot a_p(g) \cdot a_p(h) \not = 0$. 
	We then calculate: 
		\begingroup\makeatletter\def\f@size{10}\check@mathfonts
		\begin{align*} 
			\SS{f}{g}{h} = \SS{f}{h}{g} 
				=8960308425349268584612725752076582316781113083897858380 \cdot 17^{5} \Mod 17^{50}
				\not = 0. 
		\end{align*} \endgroup 
\end{eg}
	All the above calculations 
	were performed 
	on a machine with two Intel(R) Xeon(R) E5-2680 CPUs (2.70 GHz) and 256 GB of RAM. 
	The examples in Section \ref{SymRel} took a few hours each. 
	For instance, Example \ref{22Sep210816p}, which had the highest level of precision (namely $\text{mod } 5^{100}$), 
	took around 8 hours and 7 minutes. 
	The examples in Section \ref{24oct100841p} took longer, 
	as these examples dealt with the modular subgroup $\Gamma_1(N)$ instead of $\Gamma_0(N)$. 
	Each of them took roughly a day to compute.

\phantomsection 

\bibliographystyle{alpha} 
\bibliography{References.bib}

$ $

\textit{E-mail address:} \url{wissam.ghantous@ucf.edu}
\end{document}